\documentclass[review,onefignum,onetabnum]{siamart190516}

\usepackage[textsize=scriptsize,textwidth=25mm]{todonotes}
\usepackage{a4wide}



\usepackage{lipsum}
\usepackage{amsfonts}
\usepackage{graphicx}
\usepackage{epstopdf}
\usepackage{amsmath}
\allowdisplaybreaks
\usepackage{algorithmic}
\DeclareMathOperator\supp{supp}
\usepackage{caption,stackengine,graphicx,subcaption,subfloat}
\stackMath
\newcommand\frightarrow{\scalebox{1}[.4]{$\rightarrow$}}
\newcommand\darrow[1][]{\mathrel{\stackon[1pt]{\stackanchor[1pt]{\frightarrow}{\frightarrow}}{\scriptstyle#1}}}
\usepackage{xparse}

\NewDocumentCommand{\tens}{t_}
{%
\IfBooleanTF{#1}
{\tensop}
{\otimes}%
}
\NewDocumentCommand{\tensop}{m}
{%
\mathbin{\mathop{\otimes}\displaylimits_{#1}}%
}
\usepackage{tikz,pgfplots} 
\usetikzlibrary{decorations.markings,calc,fit,intersections} 
\usepackage{mathtools} 
\usepackage{amssymb} 
 

\ifpdf
\DeclareGraphicsExtensions{.eps,.pdf,.png,.jpg}
\else
\DeclareGraphicsExtensions{.eps}
\fi


\usepackage{amsmath}

\newsiamremark{remark}{Remark}
\newsiamremark{hypothesis}{Hypothesis}
\crefname{hypothesis}{Hypothesis}{Hypotheses}
\newsiamthm{claim}{Claim}
\newsiamthm{assumption}{Assumption}
\newsiamthm{example}{Example}
\usepackage{lipsum}
\usepackage{amsopn}

\newcommand{\revision}[1]{\textcolor{black}{#1}}

\headers{Hybrid models for multilane-multiclass traffic}{X. Gong, B. Piccoli, and G. Visconti}

\title{Mean-field limit of a hybrid system for multi-lane multi-class traffic\thanks{Submitted on \today.
\funding{The research of X.~G. was partially supported by the NSF CPS Synergy project "Smoothing Traffic via Energy-efficient Autonomous Driving" (STEAD) CNS 1837481.\newline
The research of B.~P. is based upon work supported by the U.S. Department of Energy’s Office of Energy Efficiency and Renewable Energy (EERE) under the Vehicle Technologies Office award number CID DE-EE0008872. The views expressed herein do not necessarily represent the views of the U.S. Department of Energy or the United States Government.\newline
The research of G.~V. has been funded by the Deutsche Forschungsgemeinschaft (DFG, German Research Foundation) under Germany’s Excellence Strategy – EXC-2023 Internet of Production – 390621612.
}}}

\author{Xiaoqian Gong\thanks{School of Mathematics and Statistical Sciences, Arizona State Univeristy, Tempe, AZ, USA (\email{xgong14@asu.edu})}
\and Benedetto Piccoli\thanks{Department of Mathematical Sciences and Center for Computational and Integrative Biology, Rutgers University, Camden, NJ, USA (\email{piccoli@camden.rutgers.edu})}
\and Giuseppe Visconti\thanks{Department of Mathematics, Sapienza University of Rome, Italy (\email{giuseppe.visconti@uniroma1.it}). This work started at the Institut f\"{u}r Geometrie und Praktische Mathematik, RWTH Aachen University, Aachen, Germany.}
}

\begin{document}

\maketitle

\begin{abstract}
This article aims to study coupled mean-field equation and ODEs with discrete events motivated by vehicular traffic flow. \revision{Precisely,} multi-lane traffic flow in presence of human-driven and autonomous vehicles is considered, with the autonomous vehicles possibly influenced by external policy makers. First a finite-dimensional hybrid system is developed based on the continuous Bando-Follow-the-Leader dynamics coupled with discrete events due to \revision{lane-change maneuvers}. 
\revision{Then the} mean-field limit of the finite-dimensional hybrid system is rigorously derived \revision{for the dynamics of the} human-driven vehicles\revision{. The microscopic lane-change maneuvers of the human-driven vehicles generates a source term to the mean-field PDE. This leads to} an infinite-dimensional hybrid system\revision{, which is} described by coupled Vlasov-type PDE, ODEs and discrete events. 
\end{abstract}

\begin{keywords}
Multi-lane traffic, autonomous vehicles, mean-field limit, hybrid systems, generalized Wasserstain distance
\end{keywords}

\begin{AMS}
90B20 (Traffic problems), 34A38 (Hybrid systems), 35Q83 (Vlasov-like equations)
\end{AMS}

\section{Introduction}
Mathematical traffic models, depending on the scale at which they represent vehicular traffic, usually can be classified into different categories: microscopic, mesoscopic, macroscopic, and cellular. We refer to the survey papers~\cite{albi2019vehicular, bellomo2011modeling,piccoli2009vehicular}, and reference therein, for general discussions about the models at various scales in the literature. In this paper, we focus on microscopic models and mesoscopic descriptions. 

Microscopic models are discrete models of traffic flow that study the behavior of individual vehicles and predict their trajectories by means of ordinary differential equations (ODEs). One such model is the combined Bando~\cite{bando1994structure} and Follow-the-Leader~\cite{FTL1961, reuschel1950vehicle1, reuschel1950vehicle} model that concerns both relaxation to an optimal velocity and interactions with the closest neighboring vehicle ahead. Mean-field equations, and in general models based on partial differential equations (PDEs), treat vehicular traffic as fluid flow, and aim to provide an aggregate and statistical viewpoint of traffic by capturing and predicting the main phenomenology of the microscopic dynamics. Within this context we would like to mention the most classical works~\cite{paveri1975TR,Prigogine61,PrigogineHerman} and recent developments, e.g.~\cite{coscia2007IJNM,DelitalaTosin2007,HertyPareschi2010,klar1997Enskog,PiccoliTosinZanella}. This scale of representation is \revision{therefore} useful and accurate in the limit of the dynamical system with \revision{a large number of} vehicles. \revision{The link} between the \revision{microscopic and the mesoscopic} description can be also rigorously established in generalized Wasserstain distance~\cite{Golse}. We point-out that this discussion is not restricted to traffic flow and is of interest in many research areas, such as in biology~\cite{CouzinKrauseFranksLevin2005,CuckerSmale2007a} or social~\cite{CristianiPiccoliTosin2011} and economic dynamics~\cite{TrimbornPareschiFrank2019}.

In the present work, we aim to develop and study qualitative properties of models for traffic which are motivated by the idea of considering, simultaneously, two important aspects: lane-change maneuvers and heterogeneous composition of the flow. The former is one of the most common maneuvers\revision{, source of interaction and risk~\cite{HertyVisconti2018},} among vehicles on motorways. Currently, multi-lane traffic is modeled either by two-dimensional models~\cite{HertyMoutariVisconti2018,SukhinovaTrapeznikovaChetverushkinChurbanova2009}, in which lane changing rules are not explicitly prescribed, or by treating lanes as discrete entities~\cite{HoldenRisebro2019,SongKarni2019}. The latter aspect, instead, is becoming more and more important with the increasingly interest in automated-driven vehicles and their effects within the vehicular traffic flow~\cite{HoogendoormReview2014}. \revision{Experiments~\cite{Piccoli-DissipationStopAndGo2018,DelleMonache2019} and mathematical models~\cite{Piccoli2020,TosinZanella2021}
have shown that a small number of controlled vehicles can stabilize traffic flow damping unstable phenomena.}

\revision{The main contributions of this paper are described in the following.} We \revision{define} microscopic dynamics \revision{for two} classes of vehicles, one identified by human-driven vehicles and the other one by automated-driven vehicles. We use a Bando-Follow-the-Leader model for both classes. \revision{More precisely, the model} is reformulated by replacing the interaction with the closest vehicle ahead by a short-range interaction kernel which allows to write the system of ODEs in a convolution framework. \revision{Furthermore,} the dynamics of autonomous vehicles differs from the dynamics of human-driven ones due to an additive control term which, in applications, may be provided by a remote controller~\cite{Piccoli-DissipationStopAndGo2018}. Along with the continuous dynamics, we consider discrete dynamics generated by the lane changing rules, which are
designed following \cite{KTH07}. 
The presence of both continuous and discrete dynamics leads us to a hybrid system, see~\cite{654885,garavello2005hybrid,4806347,piccoli1998hybrid,Tomlin_1998}. Finally, we perform a mean-field limit for human-driven vehicles only, since autonomous vehicles are supposed to be a small percentage of the total flow on motorways. The trajectories of the hybrid system
exhibit dicontinuities thus the limit procedure requires a generalization
of the classical Arzelà-Ascoli Theorem. 
This leads to a Vlasov-type PDEs with a source term~\cite{festa2018mean,HertyIllnerKlarPanferov,IllnerKlarMaterne}, which is generated by the discrete lane changing rules. Such source term induces
the measure solutions to change mass in time, thus the limit is obtained
using the generalized Wasserstein distance \cite{piccoli2014generalized}.
Together with the continuous and discrete dynamics of the autonomous vehicles, we obtain a hybrid system with mean-field limit involved,
for which we prove existence and uniqueness of solutions.

Our main result is thus a complete representation of multi-lane
multi-class hybrid system at microscopic and mesoscopic scales
together connected by a rigorous limiting procedure.
\revision{Namely, we prove the convergence of the finite dimensional hybrid system to the corresponding infinite dimensional hybrid system in Theorem \ref{thm_main}.}
This framework allows to study optimal control problems at multiple scales,
in the same spirit as \cite{BFR17,FS14}. \revision{The optimal control problems associated with the finite and infinite dimensional hybrid systems were investigated in~\cite{GongPiccoliVisconti2021}.}
We also notice that, even if our main example is vehicular traffic,
the same framework may be adapted to model any hybrid system
with multi-population at microscopic and mesoscopic scale,
including social and crowd dynamics~\cite{fornasier2014mean}.

The paper is organized as follows. In Section \ref{sec_notation}, we briefly recall the basic models, notions, notations and preliminaries used in this article. Section \ref{sec_finite_dimen_hybrid} devotes to the definition of lane changing conditions and the study of well-posedness of the finite-dimensional hybrid system modeling multi-lane traffic at the microscopic level. In Section \ref{sec_mean_field_limit_dimen_hybrid}, we define a hybrid system involving mean-field limit of the finite-dimensional hybrid system involving human-driven vehicles and prove the existence and uniqueness of the trajectories of the mean-field hybrid system. Finally, Section~\ref{sec_conclusions} ends the paper with conclusions and outlook. 

\section{Notations, Definitions and Preliminaries}
\label{sec_notation}
In this section, we first recall some basic notations and definitions about traffic flow models\revision{, hybrid systems} and the generalized Wasserstein distance we use in this article. Then we list some well-known results about solutions to Carath\'eodory differential equations and to partial differential equations of Vlasov-type with source term. At last, we give a proof to a revised version of Arzelà-Ascoli Theorem. 

\subsection{Traffic Flow Models} \label{sec_models}
In order to setup the mathematical formulation, in the following we consider a population of $P$ cars on an open stretch road. 
To each vehicle, labeled by an index $i\in \{1, \dots, P\}$, we associate a vector of indices $\iota(i) = (i, i_L, i_F)$. Here 
$i_L$ is the index of \revision{the leader, i.e.~}the vehicle in front of vehicle $i$, 
and $i_F$ is the index of \revision{the follower, i.e.~}the vehicle flowing vehicle $i$.
\revision{
Typically, the labels are assigned based on the increasing position of vehicles on the road, in such a way the first vehicle is labeled as $1$, the second as $2$, and so on. In this way, we have that, for each $i\in\{1,\dots,P\}$, $i_L=i+1$ and $i_F=i-1$. However, for the purpose of this paper, we avoid the introduction of any ordering. Labels can be randomly assigned among vehicles on the road and they remain unchanged. Then, the indices $i_L$ and $i_F$ are the labels of the closest vehicle ahead and behind the reference vehicle $i$, respectively. To fix notation, we let $(x_i, v_i)$ be the vector of position-velocity, with $x_i \in \mathbb{R}, v_i \geq 0$, of vehicle $i$. Then,
\begin{equation} \label{L_F_labels}
        i_L = \underset{\substack{k\in\{1,\dots,P\}\\x_k>x_i}}{\arg\min} x_k-x_i, \quad
        i_F = \underset{\substack{k\in\{1,\dots,P\}\\x_k<x_i}}{\arg\min} x_i-x_k.
\end{equation}
In addition, we assign $i_L=0$ if vehicle $i$ is the \revision{last} on the road, that is, $i=\underset{k\in \{1, \dots, P\}}{\arg\max} x_k$. Similarly, $i_F =0$ if vehicle $i$ is the \revision{first} on the road, that is,  $i=\underset{k\in \{1, \dots, P\}}{\arg\min} x_k$.
\begin{figure}
  \centering
    \includegraphics[width=0.49\textwidth]{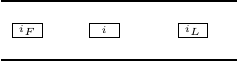}
    \caption{Schematic representation of the vehicle labeling in the single lane model.\label{fig:labels}}
\end{figure}
In Figure~\ref{fig:labels} we schematically summarize the notation.}

The Follow-the-Leader (FtL) model, which was introduced in \cite{reuschel1950vehicle1, reuschel1950vehicle}, assumes that the acceleration of a vehicle is directly proportional to difference between the velocity of the vehicle in front and its own velocity, and is inversely proportional to their distance. \revision{Let $h_i = x_{i_L} - x_i$ be the headway of the $i$-th vehicle.} The main dynamics described by the FtL model is given by 
\begin{equation}
\label{FtL_eqns}
\left\{
\begin{array}{ll}
\dot{x}_i = v_i,\\
\dot{v}_i = \beta_i \frac{v_{i_L} - v_i}{(h_i)^2}, \quad i \in \{1, \dots, P\}
,
\end{array} 
\right. 
\end{equation}
where $\beta_i$ is a positive parameter with appropriate dimension. If vehicle $i$ is the \revision{last} vehicle, 
then the dynamics of vehicle $i$ is given by 
$\dot{x}_i = v_{\max}$,
where $v_{\max}$ is a given maximum velocity, perhaps the speed limit. By system \eqref{FtL_eqns}, one can see also a drawback of the FtL model: as long as the relative velocity $\Delta v_i = v_{i_L}-v_i$ is zero, the acceleration is zero. That is to say, even at high speeds, an extremely small headway is allowed.

The Bando model, proposed by Bando et al. in \cite{bando1994structure}, fixed the aforementioned problems by associating each vehicle an optimal velocity function $V$ which describes the desired velocity for the headway. A driver controls the acceleration or deceleration based on the difference between his/her own velocity and the optimal velocity. The optimal velocity is typically an increasing function of the headway, namely it tends to zero for small headways and to the maximum value $v_{\max}$ for large headways. 
The governing equation of the Bando model is as follows: 
\begin{equation}
\label{eqn_OV_model}
\left\{
\begin{array}{ll}
\dot{x}_i = v_i,\\
\dot{v}_i = \alpha_i(V(h_i) - v_i), \quad i\in \{1, \dots, P\}, 
\end{array}
\right.
\end{equation}
where $\alpha_i$ is a positive parameter denoting the speed of response. The equilibrium point for this model is obtained when all vehicles travel at constant speed and have the same headway, see \cite{konishi2000decentralized}.

For the combined Bando-FtL model, which represents the basic model we consider in this work, the dynamics of the $i$-th vehicle is defined as follows: 
If $i_L \not = 0$, i.e., if vehicle $i$ is not the \revision{last}, 
then 
\begin{equation}
\label{eqn_bando_FtL}
\left\{
\begin{array}{ll}
\dot{x}_i = v_i,\\
\dot{v}_i = \alpha_i(V(h_i) - v_i) + \beta_i \frac{v_{i_L} - v_i}{(h_i)^2}, \quad i \in \{1, \dots, P\},
\end{array}
\right.
\end{equation}
where the headway is $h_i = x_{i_L} -x_i$. For simplicity, we take $\alpha_i = \alpha$, $\beta_i =\beta$ for all $i \in \{1, \dots, P\}$. 

\subsubsection{\revision{Convolution form of the Bando-FtL model}}
Now we will rewrite the Bando-FtL model, system \eqref{eqn_bando_FtL}, in convolution form to justify the fact that drivers adjust their acceleration or deceleration according to the velocities of their front nearby vehicles, their own velocities and optimal velocities. For $T>0$ fixed and $i=1, \dots, P$, define a time dependent atomic probability measure on $\mathbb{R}\times \mathbb{R}^{+}_0$,
\begin{equation}
\label{eqn_atomic_measure_P}
\mu_{P} (t)= \frac{1}{P} \sum \limits_{i=1}^{P} \delta_{\left(x_i(t), v_i(t)\right)}
\end{equation}
supported on an absolutely continuous trajectories $t \in [0, T] \mapsto (x_i(t), v_i(t)) \in \mathbb{R}\times \mathbb{R}^{+}_0$. 
Let $\varepsilon_0 >0$ be fixed.
Define a convolution kernel $H_1 \colon \mathbb{R} \times \mathbb{R}^{+}_0 \revision{\to} \mathbb{R}$ as \revision{$H_1(x, v) = \alpha h(x)\left(V(-x) -v\right)$},
where $h \colon \mathbb{R} \revision{\to} \mathbb{R}$ is a suitable
smooth and compactly supported function on $[-\epsilon_0,0]$ and weights the strength of the interaction depending on the distance between two vehicles. \revision{Typical choice, as in the case of flocking dynamics, is to consider a weighting function $h$ which is decreasing with respect to the distance, e.g.~$h(x)=\frac{1}{1+x^2}$.
\begin{figure}
  \centering
    \includegraphics[width=0.49\textwidth]{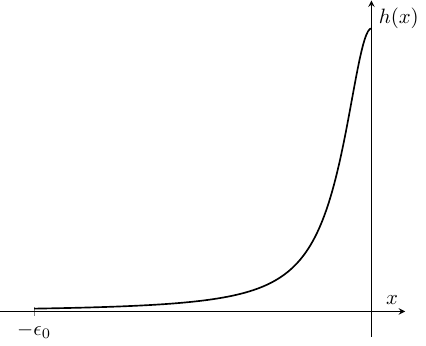}
    \caption{Schematic representation of the weighting function $h$.\label{fig:hfunc}}
\end{figure}
A graph is provided in Figure~\ref{fig:hfunc}. We observe that the introduction of the range of interaction $\epsilon_0$ allows each vehicle to interact with more than one vehicle ahead.}

Then, formally, \revision{ for each $i\in \{1, \dots, P\}$ and $(x_i, v_i) \in \mathbb{R}\times \mathbb{R}^{+}_0$}, the Bando-term in \eqref{eqn_bando_FtL} can be rewritten as
\begin{align*}
H_1*_1\mu_P(x_i, v_i) =& \frac{1}{P} \sum\limits_{k=1}^{P} H_1(x_i-x_k, v_i)
=\frac{\alpha}{P} \left(\sum \limits_{k \in i_{\varepsilon_0}}h(x_i-x_k) \left( V(x_k-x_i) - v_i\right)\right),
\end{align*}
where $*_1$ is the convolution with respect to the first variable, and $$i_{\varepsilon_0} = \{ k\revision{\in \{1, \dots, P\}} \colon 0<x_k - x_i< \varepsilon_0\}.$$
Similarly, define a convolution kernel $H_2 \colon \mathbb{R} \times \mathbb{R} \revision{\to} \mathbb{R}$ as $H_2(x,v) = \beta h(x) \frac{-v}{x^2}$. Then,  \revision{ for each $i\in \{1, \dots, P\}$ and $(x_i, v_i) \in \mathbb{R}\times \mathbb{R}^{+}_0$}, we formally rewrite the FtL-term of \eqref{eqn_bando_FtL} as
\begin{align*}
H_2*\mu_P(x_i, v_i) = & \frac{1}{P} \sum\limits_{k=1}^{P} H_2(x_i-x_k, v_i-v_k)
= \frac{\beta}{P} \left(\sum \limits_{k \in i_{\varepsilon_0}} h(x_i-x_k) \frac{v_k-v_i}{(x_i-x_k)^2}\right),
\end{align*}
where $*$ is the $(x,v)$-convolution.

Formally, the Bando-FtL model \eqref{eqn_bando_FtL} can be written using the convolution kernels as follows
\begin{equation}
\label{Bando-FTL convolution}
\left\{
\begin{array}{ll}
\dot{x_i} = v_i, & \\
\dot{v_i} = (H_1 *_1 \mu_P + H_2 * \mu_P) (x_i, v_i), & i\revision {\in \{} 1, \dots, P\revision{\}}.
\end{array}
\right.
\end{equation}

Model~\eqref{Bando-FTL convolution} has thus a close link to bounded confidence models for opinion formation, flocking and swarming behaviors~\cite{MotschTadmor2014}.

Next, we will focus also on descriptions based on PDEs. In particular, system \eqref{Bando-FTL convolution} formally admits the following mean-field limit as $P \to \infty$:
\begin{equation}\label{mean_field_limit_formally}
\partial_t \mu + v \partial_x \mu + \partial_v (\left(H_1*_1 \mu + H_2*\mu \right)\mu)=0,
\end{equation}
which gives a partial differential equation of Vlasov-type. Here $\mu$ represents the density distribution of the vehicles in position-velocity variables in a single lane. Equation \eqref{mean_field_limit_formally} describes the evolution of the density distribution $\mu$ with respect to time in the mesoscopic level. This can be easily derived in a formal way following classical computations, e.g.~see \cite{carrillo2010particle}, by considering a test function $\varphi \in C_0^1 (\mathbb{R}^{2})$ and computing the time derivative $\frac{\mathrm{d}}{\mathrm{d}t} \langle \mu_P(t), \varphi \rangle$. Mean-field limits can be also rigorously derived\revision{, for more information, please see }~\cite{ref8}.

\revision{In Section~\ref{sec_finite_dimen_hybrid} we specialize the microscopic Bando-FtL model to the case of multi-lane multi-class traffic flow. For the finite dimensional model, we will lead to a hybrid system, where the discrete events are determined by lane changes, and whose general definition is recalled in the next subsection. Whereas, in the mean-field limit of the multi-lane multi-class microscopic model, lane changes will cause presence of source terms in the Vlasov-type equation.}

\subsection{\revision{Hybrid control systems}} \label{ssec:hybrid}
\revision{
A hybrid control system is a generic term for such controlled system that involves continuous dynamics and discrete events. The discrete events are due to an automaton that contains a finite number of discrete states called locations. The continuous dynamics are given by the continuous time controlled system at each location. The following definition formalizes the details: 
\begin{definition} \label{def:hybridSystem}
A hybrid control system is a $6$-tuple $\Sigma = (\mathcal{L}, \mathcal{M}, U, \mathcal{U}, g, S)$ such that
\vspace{0.5em}
\begin{itemize}\setlength\itemsep{0.5em}
\item[(1)] $\mathcal{L}$ is the set of a finite number of discrete states, i.e.~the locations;
\item[(2)] $\mathcal{M} = \{\mathcal{M}_{ \ell}\}_{ \ell \in \mathcal{L}}$ is a finite family of smooth manifolds representing the state spaces of locations;
\item[(3)] $U=\{U_{\ell}\}_{\ell\in \mathcal{L}}$ is a finite family of sets representing the control space;
\item[(4)] $\mathcal{U} = \{\mathcal{U}_{\ell}\}_{\ell \in \mathcal{L}}$ is such that $\mathcal{U}_{\ell} = \{u: \text{Dom}(u)\subset \mathbb{R}_0^+ \to U_{\ell} \mbox{ measurable}\}$ for each $\ell \in \mathcal{L}$.
$\mathcal{U}_{\ell}$ represents the set of admissible controls at location $\ell$;
\item[(5)] $g= \{g_{\ell}\}_{\ell \in \mathcal{L}}$ is a family of maps, 
$g_{\ell} \colon \mathcal{M}_{\ell} \times \mathcal{U}_{\ell} \to T\mathcal{M}_{\ell}$, such that for every $(x,u) \in \mathcal{M}_{\ell} \times \mathcal{U}_{\ell}$, $g_{\ell}(x, u) \in T_x \mathcal{M}_{\ell}$. Here $T \mathcal{M}_{\ell}$ is the tangent bundle to the manifold $\mathcal{M}_{\ell}$, $T_x \mathcal{M}_{\ell}$ is the tangent space to $\mathcal{M}_{\ell}$ at $x$, and $g_{\ell}$ is the dynamical law at location $\ell$;
    \item[(6)] $S$ is a subset of $SW(\Sigma)$, where $SW(\Sigma) \coloneqq \{(\ell, x, \ell', x') \colon \ell, \ell' \in \mathcal{L}, x \in \mathcal{M}_{\ell}, x'\in \mathcal{M}_{\ell'}\}$.
\end{itemize}
\end{definition}
The hybrid states of the above hybrid control system $\Sigma$ is identified by a $2$-tuple, more precisely, we have the following definition,
\begin{definition} \label{def:hybridState}
    A hybrid state of the hybrid control system $\Sigma$ is a $2$-tuple $(\ell, x
    )$, where $\ell \in \mathcal{L}$ such that $x \in \mathcal{M}_{\ell} \in \mathcal{M}$. In the other words, the first variable of the state of the hybrid control system $\Sigma$ indicates the location, and the second variable indicates the space state of the location. 
    We denote by $\mathcal{HS}$ the set of the hybrid states of the hybrid control system $\Sigma$.
\end{definition}
Now we define the admissible hybrid trajectories of the hybrid control system $\Sigma$ with initial data $(\ell_0, x_0) \in \mathcal{L} \times \mathcal{M}_{\ell_0}$.
\begin{definition} \label{def:hybridTrajectory}
A map $\varphi \colon [a, b] \subset \mathbb{R}_0^+ \to \mathcal{HS},\ \varphi(t) = (\ell(t), x(t)
)$ is an admissible hybrid trajectory of the hybrid system $\Sigma=(\mathcal{J}, \mathcal{M}, U, \mathcal{U}, g, S)$ with the initial data $(\ell_0, x_0) \in \mathcal{L} \times \mathcal{M}_{\ell_0}$ if $\varphi(a) = (\ell(a), x(a)) = (\ell_0, x_0)$ and if there exists $s\in \mathbb{N}$, such that
\vspace{0.5em}
\begin{enumerate}\setlength\itemsep{0.5em}
\item[(1)] $a = t_0 < t_1 < \dots < t_s =b$;
\item[(2)] for $k=0, \dots, s-1$, $[t_{k}, t_{k+1}) \subset \text{Dom}(u_k)$, $u_k \in \mathcal{U}_{\ell(t_k)}$;
\item[(3)] for $k=0, \dots, s-1$, $\ell \restriction_{[t_k, t_{k+1}]}$ is constant;
\item[(4)] for $k=0, \dots, s-1$ the map $x \colon (t_k, t_{k+1}) \to \mathcal{M}_{\ell(t_k)}$ is absolutely continuous and $\lim\limits_{t \to t_{k+1}} x(t)$ exists;
\item[(5)] for $k=0, \dots, s-1$ and for almost every $t \in [t_k, t_{k+1}]$, we have $$\dot{x}(t) = g_{\ell(t_k)}(t,x(t), u_k(t));$$
\item[(6)] for $k=0, \dots, s-1$, $(\ell(t_{k}),x(t_{k}), \ell(t_{k+1}), x(t_{k+1})) \in S$.
\end{enumerate}
\end{definition}
Hybrid control systems have numerous applications industrial process control, manufacturing and robotics, automotive control and so on, see \cite{Tomlin_1998, Pepyne_2000, fornasier2014mean}
We mainly focus on the application of the hybrid control systems on multi-lane traffic flows in 
sections \ref{sec_finite_dimen_hybrid} and \ref{sec_mean_field_limit_dimen_hybrid}.}
\subsection{The Generalized Wasserstein Distance}
In this subsection, we recall the definitions and some properties related to the Wasserstein distance and the generalized Wasserstein distance. For a complete introduction to Wasserstein distance, see \cite{villani2003topics} and to generalized Wasserstein distance, see \cite{piccoli2014generalized}. 

Let $\mathcal{M}$ be the space of Borel measures with finite mass, $\mathcal{P}$ be the space of probability measures (the measures in $\mathcal{M}$ with unit mass) and $\mathcal{M}^{p}$ be the space of Borel measures with finite $p$-th moment on $\mathbb{R}^{d}$, where $d$ is the dimension of the space. 
We also denote with $\mathcal{M}_0^{ac}$ the subspace of $\mathcal{M}$ of measures that are with bounded support and absolutely continuous with respect to the Lebesgue measure. Given a measure $\mu \in \mathcal{M}$, we denote with $|\mu|\colon = \mu(\mathbb{R}^d)$ its mass. 
Given a Borel map $\gamma \colon \mathbb{R}^d \to \mathbb{R}^{d}$, the push-forward of $\mu$ by $\gamma$, $\gamma \# \mu$, is defined as for every Borel set $A \subset \mathbb{R}^d$, $ \gamma \# \mu (A) \colon = \mu(\gamma^{-1}(A))$. One can see that the mass of $\gamma \# \mu$ is identical to the mass of $\mu$, i.e., $|\mu| = |\gamma \# \mu|$.

Given two probability measures $\mu$, $\nu \in \mathcal{P}$, a probability measure $\pi$ on the product space $\mathbb{R}^d \times \mathbb{R}^d$ is said to be an admissible transference plan from $\mu$ to $\nu$ if the following properties hold: 
\begin{equation}
\label{transference_plan_1}
\int_{y\in \mathbb{R}^d} \,\mathrm{d}\pi(x,y) = \mathrm{d}\mu(x), \quad \int_{x\in \mathbb{R}^d} \,\mathrm{d}\pi(x,y)= \mathrm{d}\nu(y).
\end{equation}
We denote the set of admissible transference plans from $\mu$ to $\nu$ by $\Pi(\mu, \nu)$. Note that the set $\Pi(\mu, \nu)$ is always nonempty, since the tensor product $\mu \tens \nu \in \Pi(\mu, \nu)$. To each admissible transference plan from $\mu$ to $\nu$, $\pi$, one can define a cost as follows: 
$J[\pi]\colon = \int_{\mathbb{R}^d \times \mathbb{R}^d} |x-y|^p \, \mathrm{d}\pi(x,y)$,
where $|\cdot|$ represents the Euclidean norm. 
A minimizer of $J$ in $\Pi(\mu, \nu)$ always exists. Furthermore, 
the space of probability measures with finite $p$-th moment, $\mathcal{P} \cap \mathcal{M}^p$, is a natural space in which $J$ is finite. Thus for any two measures $\mu, \nu \in \mathcal{P} \cap \mathcal{M}^p$, one can define the following operator which is called Wasserstein distance 
$W_p(\mu, \nu) \colon = \left(\min\limits_{\pi\in \Pi(\mu, \nu)} J[\pi]\right)^{\frac{1}{p}}$.
Note that if $\nu^{m,1} = \frac{1}{m}\sum\limits_{k=1}^{m} \delta_{\xi_k^1}$ and $\nu^{m,2} = \frac{1}{m}\sum\limits_{k=1}^{m} \delta_{\xi_k^2}$ are two atomic measures with $m\in \mathbb{Z}^{+}$, $\xi_k^1, \xi_k^2 \in \mathbb{R}^{d}$, then 
$
W_1(\nu^{m,1}, \nu^{m,2}) \leq \frac{1}{m}\sum\limits_{k=1}^{m}|\xi_k^1-\xi_k^2|$.

We additionally recall the following lemmas related to Wasserstein distance (see, e.g., Lemma $3.11$, Lemma $3.13$, Lemma $3.15$, Lemma $4.7$ in \cite{ref8}).
\begin{lemma}
\label{lm_66}
Let $f_1$ and $f_2 \colon \mathbb{R}^n \to \mathbb{R}^{n}$ be two bounded Borel measurable functions. Then for every $\mu \in \mathcal{P}(\mathbb{R}^n)\cap \mathcal{M}^1(\mathbb{R}^n)$, one has 
\[W_1(f_1 \# \mu, f_2 \# \mu) \leq \|f_1 - f_2\|_{L^{\infty}(\supp \mu)}.\]
If in addition, $f_1$ is locally Lipschitz continuous, and $\mu, \nu \in \mathcal{P}(\mathbb{R}^n)\cap \mathcal{M}^{1}(\mathbb{R}^n)$ are both compactly supported on a ball $B$ of $\mathbb{R}^n$, then 
\[W_1(f_1 \# \mu, f_1\#\nu) \leq LW_1(\mu, \nu),\]
where $L$ is the Lipschitz constant of $f_1$ on $B$. 
\end{lemma}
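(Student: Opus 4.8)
The plan is to establish both inequalities by exhibiting explicit, generally suboptimal, transference plans realized as push-forwards and then bounding their cost $J$. Since $|f_j\#\mu| = |\mu| = 1$, every push-forward below is again a probability measure, so the definitions of $\Pi$ and $W_1$ recalled above apply verbatim.

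For the first inequality, I would introduce the Borel map $F\colon \mathbb{R}^n \to \mathbb{R}^n \times \mathbb{R}^n$, $F(z) = (f_1(z), f_2(z))$, which is measurable because $f_1$ and $f_2$ are, and set $\pi := F\#\mu$. Computing marginals shows $\pi \in \Pi(f_1\#\mu, f_2\#\mu)$: indeed $\pi(A \times \mathbb{R}^n) = \mu(f_1^{-1}(A)) = (f_1\#\mu)(A)$, and symmetrically for the second factor. Using $\pi$ as a competitor together with the change-of-variables formula for push-forwards yields
\[
W_1(f_1\#\mu, f_2\#\mu) \le J[\pi] = \int_{\mathbb{R}^n} |f_1(z) - f_2(z)| \,\mathrm{d}\mu(z) \le \|f_1 - f_2\|_{L^\infty(\supp\mu)},
\]
the last step using that $\mu$ is a probability measure concentrated on $\supp\mu$. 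Note that this part requires only boundedness and measurability, not any continuity.

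For the second inequality, let $\pi^* \in \Pi(\mu,\nu)$ be an optimal plan realizing $W_1(\mu,\nu)$, define $G\colon \mathbb{R}^n \times \mathbb{R}^n \to \mathbb{R}^n \times \mathbb{R}^n$ by $G(x,y) = (f_1(x), f_1(y))$, and set $\tilde\pi := G\#\pi^*$. Checking marginals again gives $\tilde\pi \in \Pi(f_1\#\mu, f_1\#\nu)$. Because $\mu$ and $\nu$ are supported in the ball $B$, the plan $\pi^*$ is concentrated on $\supp\mu \times \supp\nu \subseteq B \times B$, so on its support the local Lipschitz estimate $|f_1(x) - f_1(y)| \le L|x-y|$ is at our disposal, whence
\[
W_1(f_1\#\mu, f_1\#\nu) \le J[\tilde\pi] = \int |f_1(x) - f_1(y)| \,\mathrm{d}\pi^*(x,y) \le L \int |x-y| \,\mathrm{d}\pi^*(x,y) = L\, W_1(\mu,\nu).
\]

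I expect the only genuinely delicate point to be this last step: to replace the global Lipschitz constant (which need not exist, as $f_1$ is only \emph{locally} Lipschitz) by the constant $L$ valid on $B$, one must know that $\pi^*$ places no mass outside $B \times B$. This follows from the standard fact that any $\pi \in \Pi(\mu,\nu)$ is supported in the product of the supports of its marginals, since $\mu(U)=0$ forces $\pi(U\times\mathbb{R}^n)=0$. All remaining ingredients — measurability of $F$ and $G$, the marginal identities, and the transport identity $\int \varphi\,\mathrm{d}(T\#\lambda) = \int (\varphi\circ T)\,\mathrm{d}\lambda$ — are routine.
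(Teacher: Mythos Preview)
Your argument is correct and is precisely the standard proof of these two estimates: build the coupling $(f_1,f_2)\#\mu$ for the first inequality and push an optimal plan through $(f_1,f_1)$ for the second, using that any $\pi\in\Pi(\mu,\nu)$ is concentrated on $\supp\mu\times\supp\nu\subset B\times B$ to localize the Lipschitz bound. There is nothing to compare against in the paper itself: the lemma is only \emph{recalled} there, with the proof deferred to the cited reference~\cite{ref8} (Lemmas~3.11, 3.13, 3.15, 4.7), whose arguments are exactly the ones you wrote down.
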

Now we state the following assumption on map $H \colon \mathbb{R}^{2d} \to \mathbb{R}^{d}$: 
\begin{align*}
&(H1) \quad H \text{ is locally Lipschitz};\\
&(H2) \quad H \text{ is of sub-linear growth, that is, there exists a constant } C>0 \text{ such that } \\ 
& \quad |H(\xi)| \leq C(1+|\xi|), \text{ for all }\xi\in \mathbb{R}^{2d}.
\end{align*}
\begin{lemma}
\label{lm_67}
Let $H$ be a map satisfying condition (H1)-(H2). Let $R >0$. Let $\mu, \nu \colon [0, T] \to \mathcal{P}(\mathbb{R}^{2d})\cap \mathcal{M}^1(\mathbb{R}^{2d})$ be continuous maps with respect to the first order Wasserstein distance $W_1$ both satisfying 
\[\supp{\mu(t)} \subset B(0, R) \text{ and } \supp{\nu(t)} \subset B(0, R),\]
for every $t \in [0, T]$. Then for every $\rho>0$, there exists a constant $L_{\rho, R}$ such that 
\[\|H*\mu(t)-H*\nu(t)\|_{L^{\infty}(B(0, \rho))} \leq L_{\rho, R}W_1(\mu(t), \nu(t)).\]
\end{lemma}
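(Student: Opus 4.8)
The plan is to estimate the difference pointwise in $\xi\in B(0,\rho)$ and then pass to the supremum. Fix $t\in[0,T]$ and write the convolution explicitly as $(H*\mu(t))(\xi)=\int_{\mathbb{R}^{2d}}H(\xi-\eta)\,\mathrm{d}\mu(t)(\eta)$, and likewise for $\nu(t)$. The crucial structural fact is that $\mu(t)$ and $\nu(t)$ are probability measures, hence of equal mass, so the ordinary Wasserstein distance applies and I may introduce an \emph{optimal} transference plan $\pi\in\Pi(\mu(t),\nu(t))$ realizing $W_1(\mu(t),\nu(t))$; its existence is guaranteed by the discussion preceding the statement. Using the marginal conditions \eqref{transference_plan_1}, I would rewrite the difference as
\begin{equation*}
(H*\mu(t))(\xi)-(H*\nu(t))(\xi)=\int_{\mathbb{R}^{2d}\times\mathbb{R}^{2d}}\bigl(H(\xi-\eta)-H(\xi-\zeta)\bigr)\,\mathrm{d}\pi(\eta,\zeta).
\end{equation*}

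The second step exploits the uniform support bound. Since $\pi$ has marginals $\mu(t),\nu(t)$ supported in $B(0,R)$, one has $\eta,\zeta\in B(0,R)$ for $\pi$-almost every $(\eta,\zeta)$; combined with $\xi\in B(0,\rho)$ this forces both arguments $\xi-\eta$ and $\xi-\zeta$ to lie in the fixed compact ball $\overline{B(0,\rho+R)}$. Here hypothesis (H1) enters: the local Lipschitz continuity of $H$, restricted to this compact set, yields a finite constant $L_{\rho,R}$, depending only on $\rho$ and $R$ through the radius $\rho+R$, such that $|H(\xi-\eta)-H(\xi-\zeta)|\le L_{\rho,R}\,|\eta-\zeta|$ $\pi$-almost everywhere. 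Integrating against $\pi$ and recalling that $\int|\eta-\zeta|\,\mathrm{d}\pi(\eta,\zeta)=W_1(\mu(t),\nu(t))$ for the optimal plan gives $|(H*\mu(t))(\xi)-(H*\nu(t))(\xi)|\le L_{\rho,R}\,W_1(\mu(t),\nu(t))$, and taking the supremum over $\xi\in B(0,\rho)$ concludes the argument.

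The only genuinely delicate point is verifying that a single constant $L_{\rho,R}$ serves uniformly for every $\xi\in B(0,\rho)$ and every $t\in[0,T]$. This is exactly what containment in the fixed compact ball $\overline{B(0,\rho+R)}$ secures: because that ball is independent of $\xi$ and $t$, the local Lipschitz constant furnished by (H1) on it is a single number, and the estimate is automatically uniform. I note that hypothesis (H2) is not actually required for this particular bound, since all arguments remain in a compact set; it is the compactness of the supports, rather than the growth of $H$ at infinity, that drives the estimate. An alternative route would invoke Kantorovich--Rubinstein duality directly, but that would first require extending $H$ to a globally Lipschitz map (for instance by a McShane-type extension) in order to obtain an admissible test function; the transference-plan formulation above avoids this technicality entirely.
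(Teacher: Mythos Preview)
Your argument is correct. The paper does not actually supply a proof of this lemma; it is listed among several results that are ``recalled'' from the literature (specifically, the paper points to Lemmas~3.11, 3.13, 3.15, and 4.7 in \cite{ref8}), so there is no in-paper proof to compare against. Your transference-plan approach is precisely the standard one: rewrite the difference via an optimal coupling, confine the arguments of $H$ to the fixed ball $\overline{B(0,\rho+R)}$ using the support hypotheses, and invoke the local Lipschitz constant on that ball. Your observations that the constant is automatically uniform in $\xi$ and $t$, and that (H2) plays no role in this particular estimate, are both accurate.
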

Next, we recall the definition of the generalized Wasserstein distance on, $\mathcal{M}$, the space of Borel measures with finite mass on $\mathbb{R}^d$. For more detail, see \cite{piccoli2014generalized}. 
\begin{definition}
Given $a,b \in (0, \infty)$ and $p \geq 1$, the generalized Wasserstein distance between two measures $\mu, \nu \in \mathcal{M}^p$ is 
\begin{equation}
\label{defgwd}
W_{p}^{a,b}(\mu, \nu) \colon = \inf \limits_{\substack{\tilde{\mu}, \tilde{\nu} \in \mathcal{M}^{p} \\ |\tilde{\mu}|=|\tilde{\nu}|}}\left(a\left(|\mu-\tilde{\mu}|+|\nu - \tilde{\nu}|\right)+bW_p(\tilde{\mu}, \tilde{\nu})\right).
\end{equation}
\end{definition}
\begin{remark}
The standard Wasserstein distance is defined only for probability measures. Combining the standard Wasserstein distance and $L^1$ distance, the generalized Wasserstein distance can be applied to measures with different masses. 
\end{remark}
If $\mu_1$ is absolutely continuous with respect to $\mu \in \mathcal{M}$ and for every Borel set $A \subset \mathbb{R}^d$, $\mu_1(A) \leq \mu(A)$, then we write $\mu_1 \leq \mu$. 
\begin{remark}
The infimum in equation \eqref{defgwd} is always attained if one restrict the computation in equation \eqref{defgwd} to $\tilde{\mu} \leq \mu$, $\tilde{\nu} \leq \nu$. 
\end{remark}
We recall some simple properties of the generalized Wasserstein distance, $W_p^{a,b}$. Compare the following proposition with Proposition 2 in \cite{piccoli2014generalized}. 
\begin{proposition}
\label{pro_gwd_property}
Let $\mu, \nu, \mu_1, \mu_2, \nu_1, \nu_2$ be measures in $\mathcal{M}^p$. The following properties of the generalized Wasserstein distance $W_1^{1,1}$ hold: 
\begin{align*}
& W_{1}^{1,1}(k\mu, k\nu) \leq kW_1^{1,1}(\mu, \nu) \text{ for } k \geq 0;\\
& W_{1}^{1,1}(\mu_1+\mu_2, \nu_1+\nu_2) \leq W_1^{1,1}(\mu_1, \nu_1)+W_1^{1,1}(\mu_2, \nu_2). 
\end{align*}
\end{proposition}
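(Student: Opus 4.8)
The plan is to prove both inequalities directly from the definition \eqref{defgwd} by building admissible competitors for the left-hand side out of admissible competitors for the right-hand side, exploiting that the cost in \eqref{defgwd} behaves linearly under scalar multiplication and subadditively under addition of measures. Throughout I would fix $a=b=p=1$ and invoke the preceding Remark to restrict the infimum to pairs with $\tilde{\mu} \leq \mu$ and $\tilde{\nu} \leq \nu$, so that the removed-mass terms $|\mu - \tilde{\mu}|$ and $|\nu - \tilde{\nu}|$ are honest masses of positive measures.

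For the positive-homogeneity property, I would first dispose of $k=0$, where both sides vanish. For $k>0$, take any admissible pair $(\tilde{\mu}, \tilde{\nu})$ for $(\mu,\nu)$, i.e. $|\tilde{\mu}| = |\tilde{\nu}|$; then $(k\tilde{\mu}, k\tilde{\nu})$ is admissible for $(k\mu, k\nu)$ since $|k\tilde{\mu}| = |k\tilde{\nu}|$. I would check the three terms of the cost: $|k\mu - k\tilde{\mu}| = k|\mu - \tilde{\mu}|$, likewise for $\nu$, and $W_1(k\tilde{\mu}, k\tilde{\nu}) = k W_1(\tilde{\mu}, \tilde{\nu})$. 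The last identity is the key computation: if $\pi$ is optimal for $(\tilde{\mu}, \tilde{\nu})$, then $k\pi$ is an admissible transference plan from $k\tilde{\mu}$ to $k\tilde{\nu}$ with cost $J[k\pi] = kJ[\pi]$, giving $\leq$, and rescaling by $1/k$ yields the reverse, using that for $p=1$ the distance equals the minimal cost without a root. Summing, the whole cost scales exactly by $k$, and passing to the infimum over $(\tilde{\mu}, \tilde{\nu})$ delivers the claim.

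For the subadditivity property, I would take admissible pairs $(\tilde{\mu}_1, \tilde{\nu}_1)$ and $(\tilde{\mu}_2, \tilde{\nu}_2)$ for $(\mu_1,\nu_1)$ and $(\mu_2,\nu_2)$, with $\tilde{\mu}_i \leq \mu_i$ and $\tilde{\nu}_i \leq \nu_i$. The sums $(\tilde{\mu}_1 + \tilde{\mu}_2, \tilde{\nu}_1 + \tilde{\nu}_2)$ form an admissible pair for $(\mu_1 + \mu_2, \nu_1 + \nu_2)$, since equal masses add to equal masses. I would then bound each of the three terms: the removed-mass terms add exactly by positivity of $\mu_i - \tilde{\mu}_i$ and $\nu_i - \tilde{\nu}_i$, while $W_1(\tilde{\mu}_1 + \tilde{\mu}_2, \tilde{\nu}_1 + \tilde{\nu}_2) \leq W_1(\tilde{\mu}_1, \tilde{\nu}_1) + W_1(\tilde{\mu}_2, \tilde{\nu}_2)$, which I would prove by summing optimal plans $\pi_1 + \pi_2$, verifying its marginals are $\tilde{\mu}_1 + \tilde{\mu}_2$ and $\tilde{\nu}_1 + \tilde{\nu}_2$, and noting $J[\pi_1 + \pi_2] = J[\pi_1] + J[\pi_2]$. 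Adding the three bounds shows the cost of the summed competitor is dominated by the sum of the individual costs; because the two infima over $(\tilde{\mu}_1, \tilde{\nu}_1)$ and $(\tilde{\mu}_2, \tilde{\nu}_2)$ decouple, taking them independently gives the result.

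The steps are elementary; the only points requiring care — the ``main obstacle,'' such as it is — are the behaviour of the ordinary Wasserstein term $W_1$ under scaling and summation of measures of matching mass, since $W_1$ is a priori defined only between measures of equal mass, together with confirming that the constructed objects ($k\tilde{\mu}$, $\tilde{\mu}_1 + \tilde{\mu}_2$, $k\pi$, $\pi_1 + \pi_2$) are genuinely admissible, i.e. have the correct mass balance and marginals. Once these transference-plan manipulations are justified, both estimates follow by passing to the infima.
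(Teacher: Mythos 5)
Your proof is correct. Note, though, that the paper offers no proof of this proposition at all: it simply refers the reader to Proposition~2 of \cite{piccoli2014generalized}, where these properties are established. Your argument is essentially the one used in that reference --- construct competitors for the left-hand side by scaling (respectively, summing) competitors for the right-hand side, and check that the three terms of the cost in \eqref{defgwd} scale linearly (respectively, add). The points that genuinely require care are exactly the ones you flag: $W_1$ inside \eqref{defgwd} must be understood between measures of equal, not necessarily unit, mass; the objects $k\pi$ and $\pi_1+\pi_2$ must be verified to be admissible plans with marginals $k\tilde{\mu}, k\tilde{\nu}$ and $\tilde{\mu}_1+\tilde{\mu}_2, \tilde{\nu}_1+\tilde{\nu}_2$; and for $p=1$ the cost is linear in the plan, so $J[k\pi]=kJ[\pi]$ and $J[\pi_1+\pi_2]=J[\pi_1]+J[\pi_2]$. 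One small simplification: your appeal to the Remark (restricting to $\tilde{\mu}_i\leq\mu_i$, $\tilde{\nu}_i\leq\nu_i$) is convenient but not needed for subadditivity, since the triangle inequality for total variation, $\left|(\mu_1-\tilde{\mu}_1)+(\mu_2-\tilde{\mu}_2)\right|\leq|\mu_1-\tilde{\mu}_1|+|\mu_2-\tilde{\mu}_2|$, gives the same bound for arbitrary admissible competitors; similarly, for the first property only the inequality $W_1(k\tilde{\mu},k\tilde{\nu})\leq kW_1(\tilde{\mu},\tilde{\nu})$ is used, so the reverse rescaling step, while true, can be omitted.
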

Similar to Lemmas \ref{lm_66}, \ref{lm_67}, we have the following lemmas for the generalized Wasserstein distance. 
\begin{lemma}
\label{lm_66_gwd}
Let $f_1, f_2 \colon \mathbb{R}^{n} \to \mathbb{R}^{n}$ be bounded Borel measureable functions. Then for every $\mu \in \mathcal{M}^1(\mathbb{R}^{n})$, one has 
\[W_1^{1,1}(f_1\#\mu, f_2\#\mu) \leq \|f_1 - f_2\|_{L^{\infty}(\supp\mu)}.\]

If in addition $f_1$ is locally Lipschitz continuous Borel measurable functions, then for $\mu, \nu \in \mathcal{M}^1(\mathbb{R}^n)$ compactly supported on a ball $B$ of $\mathbb{R}^n$, 
\[W_1^{1,1}(f_1\#\mu, f_1\#\nu) \leq \max\{L, 1\} W_1^{1,1}(\mu, \nu),\]
where $L$ is the Lipschitz constant of $f_1$ on $B$. 
\end{lemma}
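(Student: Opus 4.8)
The plan is to establish both inequalities by reducing the generalized Wasserstein distance $W_1^{1,1}$ to the ordinary Wasserstein distance $W_1$ via a careful choice of competitors in the infimum \eqref{defgwd}, and then to invoke Lemma~\ref{lm_66}. Throughout, the basic facts I would rely on are that push-forward by a Borel map is linear and mass-preserving, so $|\gamma\#\sigma|=|\sigma|$ for every $\sigma\in\mathcal{M}$, and that by the Remark following \eqref{defgwd} the infimum defining $W_1^{1,1}(\mu,\nu)$ is attained at a pair $\tilde\mu\le\mu$, $\tilde\nu\le\nu$ with $|\tilde\mu|=|\tilde\nu|$.

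For the first inequality I would simply test the infimum defining $W_1^{1,1}(f_1\#\mu,f_2\#\mu)$ with $\tilde\mu=f_1\#\mu$ and $\tilde\nu=f_2\#\mu$. These two measures share the mass $|\mu|$, so the constraint $|\tilde\mu|=|\tilde\nu|$ is met and both mass-defect terms $|f_1\#\mu-\tilde\mu|$ and $|f_2\#\mu-\tilde\nu|$ vanish. This yields $W_1^{1,1}(f_1\#\mu,f_2\#\mu)\le W_1(f_1\#\mu,f_2\#\mu)$, and the right-hand side is at most $\|f_1-f_2\|_{L^\infty(\supp\mu)}$ by the first part of Lemma~\ref{lm_66} (for a probability measure $\mu$; the general case reduces to this by normalization, the standard Wasserstein cost scaling linearly with the mass).

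The second inequality is the substantive one. I would begin with the attained decomposition
\[
W_1^{1,1}(\mu,\nu)=|\mu-\tilde\mu|+|\nu-\tilde\nu|+W_1(\tilde\mu,\tilde\nu),
\]
and use the admissible pair $f_1\#\tilde\mu$, $f_1\#\tilde\nu$ (of equal mass) as competitors for $W_1^{1,1}(f_1\#\mu,f_1\#\nu)$. The crux is to track how each of the three terms behaves under $f_1\#$. Since $\tilde\mu\le\mu$, the measure $\mu-\tilde\mu$ is nonnegative, and by linearity $f_1\#\mu-f_1\#\tilde\mu=f_1\#(\mu-\tilde\mu)$ is nonnegative with unchanged mass, so $|f_1\#\mu-f_1\#\tilde\mu|=|\mu-\tilde\mu|$, and similarly for $\nu$; the mass-defect terms are therefore \emph{unchanged}. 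The transport term, on the other hand, is governed by the Lipschitz part of Lemma~\ref{lm_66}: the restrictions $\tilde\mu,\tilde\nu$ inherit compact support in $B$ from $\mu,\nu$, so $W_1(f_1\#\tilde\mu,f_1\#\tilde\nu)\le L\,W_1(\tilde\mu,\tilde\nu)$. Combining,
\[
W_1^{1,1}(f_1\#\mu,f_1\#\nu)\le|\mu-\tilde\mu|+|\nu-\tilde\nu|+L\,W_1(\tilde\mu,\tilde\nu)\le\max\{L,1\}\,W_1^{1,1}(\mu,\nu),
\]
where the final step bounds the coefficient $1$ of the mass-defect terms and the coefficient $L$ of the transport term simultaneously by $\max\{L,1\}$.

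The routine portions are the admissibility checks and the linearity and mass invariance of the push-forward. I expect the main obstacle to lie in the asymmetry exploited in the last step: one must argue that the $L^1$-type mass-defect terms are \emph{not} inflated by the Lipschitz constant — which rests on the positivity $\mu-\tilde\mu\ge0$ supplied by the Remark — while the transport term is inflated by exactly $L$; it is precisely this contrast that forces the constant $\max\{L,1\}$ in place of a single $L$. A subsidiary technical step is that Lemma~\ref{lm_66} is phrased for probability measures, so the equal-mass but possibly non-unit-mass measures $\tilde\mu,\tilde\nu$ should first be normalized, using that $W_1$ and $f_1\#$ both scale linearly under multiplication of the measure by a constant.
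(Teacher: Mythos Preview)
Your proof is correct and follows precisely the approach the paper indicates: it reduces each inequality to Lemma~\ref{lm_66} by choosing suitable competitors in the infimum defining $W_1^{1,1}$, using the attained decomposition $\tilde\mu\le\mu$, $\tilde\nu\le\nu$ from the Remark after \eqref{defgwd} for the second part. The paper gives no further detail beyond ``combine Lemma~\ref{lm_66} and the definition of generalized Wasserstein distance,'' and your argument is exactly the natural way to carry that out.
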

\begin{lemma}
\label{lm_67_gwd}
Let $H$ be a map satisfying condition (H1)-(H2). Let $R >0$ be fixed. Let $\mu, \nu \colon [0, T] \to \mathcal{M}^1(\mathbb{R}^{2d})$ be continuous maps with respect to the generalized Wasserstein distance $W_1^{1,1}$ both satisfying 
\[\supp{\mu(t)} \subset B(0, R) \text{ and } \supp{\nu(t)} \subset B(0, R),\]
for every $t \in [0, T]$. Then for every $\rho>0$, there exists a constant $L_{\rho, R}$ such that 
\begin{equation}
\label{eqn_Lem2.8}
\|H*\mu(t)-H*\nu(t)\|_{L^{\infty}(B(0, \rho))} \leq L_{\rho, R}W_1^{1,1}(\mu(t), \nu(t)).
\end{equation}
\end{lemma}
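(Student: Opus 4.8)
The plan is to follow the same route as the proof of Lemma~\ref{lm_67}, but to split the generalized Wasserstein distance into its transport part and its mass-discrepancy part and estimate each separately against the two summands in the definition~\eqref{defgwd} of $W_1^{1,1}$. Fix $t\in[0,T]$ and $\xi\in B(0,\rho)$. Because the kernel is always evaluated at arguments $\xi-\eta$ with $\xi\in B(0,\rho)$ and $\eta\in\supp\mu(t)\cup\supp\nu(t)\subset B(0,R)$, every relevant argument lies in the compact ball $\overline{B}(0,\rho+R)$. By (H1) the map $H$ is locally Lipschitz, hence both bounded and Lipschitz there; I set $M:=\|H\|_{L^\infty(B(0,\rho+R))}$ and let $L$ be the Lipschitz constant of $H$ on $B(0,\rho+R)$.

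Next I invoke the remark following~\eqref{defgwd}: the infimum defining $W_1^{1,1}(\mu(t),\nu(t))$ is attained by a pair $\tilde\mu\le\mu(t)$, $\tilde\nu\le\nu(t)$ with $|\tilde\mu|=|\tilde\nu|=:m$. Writing
\begin{equation*}
\big(H*\mu(t)-H*\nu(t)\big)(\xi)=\int H(\xi-\eta)\,d(\mu(t)-\tilde\mu)(\eta)+\Big(\int H(\xi-\eta)\,d\tilde\mu-\int H(\xi-\eta)\,d\tilde\nu\Big)-\int H(\xi-\eta)\,d(\nu(t)-\tilde\nu)(\eta),
\end{equation*}
I estimate the three groups separately. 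For the first and third, the restriction $\tilde\mu\le\mu(t)$, $\tilde\nu\le\nu(t)$ guarantees that $\mu(t)-\tilde\mu\ge0$ and $\nu(t)-\tilde\nu\ge0$ are nonnegative measures supported in $B(0,R)$, so the sup bound on $H$ gives the crude estimates $\big|\int H\,d(\mu(t)-\tilde\mu)\big|\le M\,|\mu(t)-\tilde\mu|$ and $\big|\int H\,d(\nu(t)-\tilde\nu)\big|\le M\,|\nu(t)-\tilde\nu|$.

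For the middle, equal-mass term I argue as in Lemma~\ref{lm_67}: if $m=0$ it vanishes, and otherwise I pick a transference plan $\pi$ with marginals $\tilde\mu$ and $\tilde\nu$ realizing $W_1(\tilde\mu,\tilde\nu)$, and use the Lipschitz bound on $H$ to get
\begin{equation*}
\Big|\int H(\xi-\eta)\,d\tilde\mu-\int H(\xi-\eta)\,d\tilde\nu\Big|=\Big|\iint\big(H(\xi-x)-H(\xi-y)\big)\,d\pi(x,y)\Big|\le L\iint|x-y|\,d\pi(x,y)=L\,W_1(\tilde\mu,\tilde\nu),
\end{equation*}
since $\xi-x,\xi-y\in B(0,\rho+R)$ on the support of $\pi$. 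Setting $L_{\rho,R}:=\max\{M,L\}$ and summing the three bounds yields
\begin{equation*}
\big|(H*\mu(t)-H*\nu(t))(\xi)\big|\le\max\{M,L\}\big(|\mu(t)-\tilde\mu|+|\nu(t)-\tilde\nu|+W_1(\tilde\mu,\tilde\nu)\big)=L_{\rho,R}\,W_1^{1,1}(\mu(t),\nu(t)),
\end{equation*}
where the last equality uses that $\tilde\mu,\tilde\nu$ attain the infimum. As this bound is uniform in $\xi\in B(0,\rho)$, taking the supremum over $\xi$ gives~\eqref{eqn_Lem2.8}.

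The estimate is pointwise in $t$, so the time-continuity hypothesis plays no role in the inequality itself; the only genuinely new point relative to Lemma~\ref{lm_67} is the mass-discrepancy bookkeeping. The main thing to get right is the choice of decomposition, so that its three pieces line up exactly with the three summands of $W_1^{1,1}$, together with the requirement that the leftover measures $\mu(t)-\tilde\mu$ and $\nu(t)-\tilde\nu$ be nonnegative and supported in $B(0,R)$ (forced by $\tilde\mu\le\mu(t)$, $\tilde\nu\le\nu(t)$) so that the crude $L^\infty$ bound is legitimate. Balancing the Lipschitz constant $L$ from the transport part against the sup bound $M$ from the mass part via their maximum then closes the argument.
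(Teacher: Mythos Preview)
Your proof is correct and is precisely the argument the paper has in mind: the paper gives only the one-line indication ``combine Lemma~\ref{lm_66}, Lemma~\ref{lm_67}, and the definition of generalized Wasserstein distance,'' and you have carried this out by taking an optimal pair $\tilde\mu\le\mu(t)$, $\tilde\nu\le\nu(t)$ from the remark after~\eqref{defgwd}, using the Lipschitz estimate of Lemma~\ref{lm_67} on the equal-mass transport piece, and the $L^\infty$ bound on $H$ for the two mass-discrepancy pieces. The choice $L_{\rho,R}=\max\{M,L\}$ and the observation that $\tilde\mu,\tilde\nu$ inherit the support bound $B(0,R)$ are exactly the details needed to make the one-line sketch rigorous.
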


One can prove Lemma \ref{lm_66_gwd} and Lemma \ref{lm_67_gwd} by combining Lemma \ref{lm_66}, Lemma \ref{lm_67}, and the definition of generalized Wasserstein distance. 
\subsection{ Carath\'eodory Differential Equations}
In this section, we recall 
the following global existence and uniqueness result (see Theorem $6.2$ in~\cite{fornasier2014mean}) for\\
Carath\'eodory 
differential equations. For further detailed discussions, see also~\cite{filipov1988differential}. 
\begin{theorem}
\label{global_existence_uniqueness}
Let $T>0$ and $n \geq 1$ be fixed. Consider a Carath\'eodory function $g \colon [0, T] \times \mathbb{R}^n \to \mathbb{R}^n$. Assume that there exists a constant $C>0$ such that for almost every $t\in [0, T]$ and every $y \in \mathbb{R}^n$, $|g(t, y)| \leq C(1+|y|)$.
Then given $y_0 \in \mathbb{R}^n$, there exists a solution $y(t)$ of 
$
\dot{y}(t) = g(t, y(t))
$
on the whole interval $[0, T]$ such that $y(0) = y_0$. Any such solution satisfies for every $t \in [0, T]$,
$|y(t)| \leq \left(|y_0|+Ct\right)e^{Ct}$.

If in addition, for every relatively compact open subset of $\mathbb{R}^n$, 
$
|g(t, y_1) - g(t, y_2)| \leq L|y_1-y_2|
$
holds, then the solution is uniquely determined by the initial condition $y_0$ on the whole interval $[0, T]$. 
\end{theorem}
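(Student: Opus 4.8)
The plan is to work throughout with the equivalent integral formulation: an absolutely continuous $y\colon[0,T]\to\mathbb{R}^n$ solves the Cauchy problem if and only if $y(t) = y_0 + \int_0^t g(s,y(s))\,\mathrm{d}s$ for every $t\in[0,T]$. I would first establish the a priori bound, since it is needed both for the conclusion of the statement and to upgrade local to global existence. For any such solution, the growth hypothesis gives $|y(t)| \le |y_0| + \int_0^t C(1+|y(s)|)\,\mathrm{d}s = |y_0| + Ct + C\int_0^t |y(s)|\,\mathrm{d}s$; applying Gr\"onwall's inequality with the nondecreasing majorant $a(t) = |y_0| + Ct$ yields $|y(t)| \le (|y_0| + Ct)e^{Ct}$, which is exactly the claimed estimate.

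For existence, I would use an approximation-and-compactness argument. First I construct approximate solutions, for instance the delayed Euler--Tonelli polygons defined by $y_k(t) = y_0 + \int_0^t g(s, y_k(s - 1/k))\,\mathrm{d}s$ with $y_k \equiv y_0$ for $s \le 0$, which are built step by step and satisfy the same Gr\"onwall bound uniformly in $k$; set $M := (|y_0| + CT)e^{CT}$. Because $|g(s, y_k(\cdot))| \le C(1+M)$ is uniformly integrable, the family $\{y_k\}$ is uniformly bounded and uniformly Lipschitz, hence equicontinuous, so by the Arzel\`a--Ascoli theorem a subsequence converges uniformly to a limit $y$. Passing to the limit in the integral equation then relies on the continuity of $g(s,\cdot)$ guaranteed by the Carath\'eodory structure together with the dominated convergence theorem, the integrand being dominated by the integrable constant $C(1+M)$; the limit $y$ therefore satisfies the integral identity and is a global solution on $[0,T]$.

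For uniqueness under the additional local Lipschitz hypothesis, I would exploit that the a priori bound confines the graphs of any two solutions $y_1, y_2$ emanating from $y_0$ to a fixed compact ball $B \subset \mathbb{R}^n$ on which $g(t,\cdot)$ is Lipschitz with some constant $L$. Subtracting the two integral equations gives $|y_1(t) - y_2(t)| \le \int_0^t |g(s, y_1(s)) - g(s, y_2(s))|\,\mathrm{d}s \le L\int_0^t |y_1(s) - y_2(s)|\,\mathrm{d}s$, and Gr\"onwall's inequality forces $y_1 \equiv y_2$ on $[0,T]$.

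The main obstacle I anticipate is the measurability and integrability bookkeeping intrinsic to Carath\'eodory functions: verifying that $s \mapsto g(s, y(s))$ is measurable for continuous $y$, so that the integral formulation is meaningful, and that the hypotheses of the dominated convergence theorem are genuinely met in the limit passage. These rest on the standard fact that a Carath\'eodory function composed with a measurable argument is measurable, which I would invoke; by contrast, the remaining ingredients (Gr\"onwall, Arzel\`a--Ascoli, and the Lipschitz continuation) are routine.
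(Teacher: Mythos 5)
The paper does not prove this theorem: it is recalled verbatim from the literature (Theorem 6.2 of the cited work of Fornasier et al., with Filippov's book given for further details), so there is no in-paper argument to compare against. Your proposal is a correct, self-contained reconstruction of the classical proof, and it is essentially the argument found in those references: the integral (Carath\'eodory) formulation, the a priori bound via Gr\"onwall with nondecreasing majorant $a(t)=|y_0|+Ct$ (which does give exactly $(|y_0|+Ct)e^{Ct}$), existence via delayed Euler--Tonelli approximants plus Arzel\`a--Ascoli plus dominated convergence, and uniqueness by confining both solutions to the compact ball $\overline{B}(0,(|y_0|+CT)e^{CT})$ where the local Lipschitz hypothesis applies, then Gr\"onwall. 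Two small points you should write out if you formalize this: first, the uniform bound on the approximants $y_k$ requires a slightly modified Gr\"onwall step because of the delay --- one applies the inequality to $u_k(t)=\sup_{r\in[0,t]}|y_k(r)|$, using that $|y_k(s-1/k)|\le u_k(s)$; second, in the limit passage you need $y_{k_j}(\cdot-1/k_j)\to y$ uniformly, which follows from the uniform Lipschitz bound $|y_{k_j}(s-1/k_j)-y_{k_j}(s)|\le C(1+M)/k_j$ combined with $\|y_{k_j}-y\|_\infty\to 0$. Both are routine, and your flagged measurability issue (Carath\'eodory function composed with a continuous curve is measurable) is indeed the standard fact needed; with these details filled in, the proof is complete.
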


\subsection{Partial Differential Equations of Vlasov-type with Source Term}
\label{sec_PDE_source}
In this subsection, we consider partial differential equations of Vlasov-type. 

Let $H_1, H_2$ be two maps satisfying condition (H1)-(H2). Let $T>0$, $R>0$ be fixed. Consider a continuous map $\mu \colon [0, T] \to \mathcal{P}(\mathbb{R}^{2d}) \cap \mathcal{M}^1(\mathbb{R}^{2d})$ with respect to the first order Wasserstein distance, $W_1$, such that $\supp \mu(t) \subset B(0, R)$ for all $t \in [0, T]$, and a time dependent atomic measure $\nu(t)(y,w) = \frac{1}{M}\sum\limits_{k=1}^{M} \delta_{\left(y_k(t), w_k(t)\right)}$ supported on the absolutely continuous trajectories $t \mapsto (y_k(t), w_k(t))$, $k=1, \dots, M_j$. Then given an initial datum $P_0 \colon = (x_0, v_0) \in \mathbb{R}^{2d}$, there exists a unique solution $P(t) \colon=(x(t), v(t))$ on the whole time interval $[0, T]$ to the following system of ODEs on~$\mathbb{R}^{2d}$ 
\[ \left\{
\begin{array}{ll}
\dot{x}(t) = v(t)\\
\dot{v}(t) = \left(H_1*_1(\mu + \nu)+H_2*(\mu+\nu)\right)(x(t), v(t)). 
\end{array} 
\right. \]
Therefore, one can consider a family of flow maps
\begin{equation}
\label{eqn_flow}
\mathcal{T}_{t}^{\mu,\nu}\colon P_0 \in \mathbb{R}^{2d} \mapsto P(t) \in \mathbb{R}^{2d}.
\end{equation}
indexed by $t \in [0, T]$. Furthermore, the flow map $\mathcal{T}_t^{\mu, \nu}$ is Lipschitz continuous. In fact, let 
$\mu^q \colon [0, T] \to \mathcal{P}(\mathbb{R}^{2d}) \cap \mathcal{M}^{1}(\mathbb{R}^{2d})$, $q=1,2$, be two continuous maps with respect to Wasserstein distance and be equi-compactly supported in $B(0, R)$. Let $\nu^{1}$, $\nu^{2}$ be two atomic measures supported on the respective absolutely continuous trajectories $t \mapsto (y_k^{q}(t), w_k^{q}(t))$, $q=1,2$ and $k = 1, \dots , M$. Fix $r>0$. Then there exist constants $\rho, L>0$, such that
whenever $|P_1| \leq r$,$|P_2| \leq r$,
\begin{equation}
\label{eqn_flow_bounded_Lip}
\begin{aligned}
|\mathcal{T}_{t}^{\mu^1, \nu^{1}}(P_1) - \mathcal{T}_{t}^{\mu^2, \nu^{2}}(P_2)| \leq& e^{Lt}|P_1-P_2| \\ 
& + \int_{0}^{t}e^{L(s-t)}\left\|\left(H_1*_1(\mu^1+\nu^{1})+H_2*(\mu^1+\nu^{1})\right) \right.\\ 
&\left.-\left(H_1*_1(\mu^2+\nu^{2})+H_2*(\mu^2+\nu^{2})\right) \right\|_{L^{\infty}(B(0, \rho))}\,\mathrm{d}s,
\end{aligned}
\end{equation}
for every $t \in [0, T]$. For more details, please see \cite{fornasier2014mean}. 

Given an initial condition $\mu_0 \in \mathcal{P}(\mathbb{R}^{2d}) \cap \mathcal{M}^1(\mathbb{R}^{2d})$ of bounded support, we say that a measure $\mu(t)$ is a weak equi-compactly supported solution of the following Vlasov-type PDE with the initial datum $\mu_0$,
\begin{equation}
\label{PDE_Vlasov}
\partial_t \mu + v\cdot \nabla_{x} \mu + \nabla_{v} \cdot \left[(H_1*_1(\mu+\nu)+H_2*(\mu+\nu))\mu\right]=0, 
\end{equation} 
if
$
(i) \quad \mu(0) = \mu_0$; \\
$(ii)\quad \supp\mu(t) \subset B(0, R) \text{ for all } t\in [0, T], \text{ for some } R>0$;\\
$(iii)\quad\text{for every } \varphi \in C_c^{\infty} (\mathbb{R}^{2d})$, 
\[\frac{\mathrm{d}}{\mathrm{d}t} \int_{\mathbb{R}^{2d}} \varphi(x,v)\,\mathrm{d}\mu(t)(x,v) = \int_{\mathbb{R}^{2d}} \nabla \varphi(x,v) \cdot \tilde{\omega}_{H_1, H_2,\mu, \nu^j}(t, x, v)\,\mathrm{d}\mu(t)(x,v)\]
where $\tilde{\omega}_{H_1, H_2,\mu, \nu}(t, x, v)\colon [0,T] \times \mathbb{R}^d \times \mathbb{R}^d \to \mathbb{R}^{2d}$ is defined as 
\begin{equation}
\label{eqn_omega}
\tilde{\omega}_{H_1, H_2,\mu, \nu}(t, x, v)\colon = (v, (H_1*_1(\mu+\nu)+H_2*(\mu+\nu))(x,v)). 
\end{equation}
Furthermore, following from Section $8.1$ in \cite{ambrosio2008gradient}, a measure $\mu(t)$ is a weak equi-compactly supported solution of equation \eqref{PDE_Vlasov} if and only if it satisfies condition $(ii)$ and the measure-theoretical fixed point equation 
$ \mu(t)= \left(\mathcal{T}_t^{\mu, \nu}\right)\#\mu_0$ 
where the flow function $\mathcal{T}_t^{\mu, \nu}$ is defined in equation \eqref{eqn_flow}. 

Now we consider solutions to the following Vlasov-type PDE with initial datum $\mu_0 \in \mathcal{M}_0^{ac}(\mathbb{R}^{2d})\cap \mathcal{M}^{1}(\mathbb{R}^{2d})$ and source term $S$
\begin{equation}
\label{eqn_source}
\partial_t \mu + v\cdot \nabla_{x} \mu + \nabla_{v} \cdot \left[(H_1*_1(\mu+\nu)+H_2*(\mu+\nu))\mu\right]=S(\mu)
\end{equation}
under the following hypotheses: 
\begin{align*}
& (S_1) \quad S(\mu) \text{ has uniformly bounded mass and support, that is, there exist } Q, R,\\
& \quad \quad\quad \text{ such that } S(\mu)(\mathbb{R}^{2d}) \leq Q, \text{ and } \supp(S(\mu)) \subset B(0, R);\\ 
& (S_2)\quad S \text{ is Lipschitz, that is, there exists } L, \text{ such that, for any } \mu, \nu \in \mathcal{M}^{1}(\mathbb{R}^{2d}),\\
& \quad \quad\quad W_{1}^{1,1}(S(\mu), S(\nu)) \leq L W_1^{1,1}(\mu, \nu). 
\end{align*}

A measure $\mu(t)$ is a weak solution of equation \eqref{eqn_source} with a given initial datum $\mu_0 \in \mathcal{M}_0^{ac}(\mathbb{R}^{2d})\cap \mathcal{M}^{1}(\mathbb{R}^{2d})$, if $\mu(0) = \mu_0$ and if for every $\varphi \in C_c^{\infty}(\mathbb{R}^{2d})$, it holds

\begin{align*}
& \frac{\mathrm{d}}{\mathrm{d}t} \int_{\mathbb{R}^{2d}} \varphi(x,v)\,\mathrm{d}\mu(t)(x,v) =\\
=& \int_{\mathbb{R}^{2d}} \varphi(x,v)\,\mathrm{d}S(\mu)(x,v)+ \int_{\mathbb{R}^{2d}} \nabla \varphi(x,v) \cdot \tilde{\omega}_{H_1, H_2,\mu, \nu}(t, x, v)\,\mathrm{d}\mu(t)(x,v),
\end{align*}
where $\tilde{w}_{H_1, H_2, \mu, \nu}$ is as defined in \eqref{eqn_omega}. 
\begin{theorem}
Given an initial datum $\mu_0 \in \mathcal{M}_0^{ac}(\mathbb{R}^{2d})\cap \mathcal{M}^{1}(\mathbb{R}^{2d})$, there exists a unique weak solution $\mu(t)$ to equation \eqref{eqn_source} under the hypotheses $(S_1), (S_2)$. Furthermore, $\mu(t) \in \mathcal{M}_0^{ac}(\mathbb{R}^{2d})\cap \mathcal{M}^{1}(\mathbb{R}^{2d})$.
\end{theorem}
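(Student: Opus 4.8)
The plan is to recast \eqref{eqn_source} as a fixed-point equation for a measure-valued curve and to solve it by a contraction argument in the generalized Wasserstein distance, mirroring the way the homogeneous equation was handled through the identity $\mu(t)=(\mathcal{T}_t^{\mu,\nu})\#\mu_0$. First I would introduce the two-parameter flow $\mathcal{T}_{t,s}^{\mu,\nu}$, i.e. the solution operator of the non-autonomous ODE in \eqref{eqn_flow} started at time $s$ rather than $0$, so that $\mathcal{T}_{t,0}^{\mu,\nu}=\mathcal{T}_t^{\mu,\nu}$ and $\mathcal{T}_{t,s}^{\mu,\nu}\circ\mathcal{T}_{s,r}^{\mu,\nu}=\mathcal{T}_{t,r}^{\mu,\nu}$; its existence, uniqueness and Lipschitz dependence follow from Theorem \ref{global_existence_uniqueness} together with \eqref{eqn_flow_bounded_Lip}. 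With this notation I would define, for a candidate curve $\mu\in C([0,T];\mathcal{M}^1)$, the Duhamel-type operator
\[
\mathcal{F}(\mu)(t) := (\mathcal{T}_{t,0}^{\mu,\nu})\#\mu_0 + \int_0^t (\mathcal{T}_{t,s}^{\mu,\nu})\#S(\mu(s))\,\mathrm{d}s,
\]
and show, by testing against $\varphi\in C_c^\infty$ and differentiating in $t$, that $\mu$ is a weak solution of \eqref{eqn_source} if and only if $\mu=\mathcal{F}(\mu)$. Establishing this equivalence replaces the role of the measure-theoretic fixed-point identity recalled for the source-free equation.

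Next I would pin down the complete metric space on which $\mathcal{F}$ acts. Using $(S_1)$ one controls the mass, $|\mathcal{F}(\mu)(t)|\le|\mu_0|+Qt\le|\mu_0|+QT=:M_T$, and using the sublinear growth $(H2)$ of $H_1,H_2$ together with the bounded support of $\mu_0$, of $\nu$ and of $S(\mu)$, a Gr\"onwall estimate along the characteristics yields a radius $R_T$, finite on $[0,T]$, such that $\supp\mathcal{F}(\mu)(t)\subset B(0,R_T)$ whenever the candidate is itself supported in $B(0,R_T)$ with mass $\le M_T$; the point is that taking the supremum over trajectories simultaneously makes the support radius appear on both sides of a \emph{linear} differential inequality. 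One also checks that $t\mapsto\mathcal{F}(\mu)(t)$ is continuous for $W_1^{1,1}$. Hence $\mathcal{F}$ maps the set $\mathcal{C}_{R_T,M_T}:=\{\mu\in C([0,T];\mathcal{M}^1):\mu(0)=\mu_0,\ |\mu(t)|\le M_T,\ \supp\mu(t)\subset B(0,R_T)\}$, which is complete for the distance $\sup_{t\in[0,T]}W_1^{1,1}(\cdot,\cdot)$, into itself.

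The core is the contraction estimate. For $\mu,\tilde\mu\in\mathcal{C}_{R_T,M_T}$ I would bound $W_1^{1,1}(\mathcal{F}(\mu)(t),\mathcal{F}(\tilde\mu)(t))$ by splitting it, via the subadditivity and scaling of $W_1^{1,1}$ in Proposition \ref{pro_gwd_property}, into the homogeneous contribution and the Duhamel contribution. For the homogeneous term I would use Lemma \ref{lm_66_gwd} to compare the pushforwards of $\mu_0$ through $\mathcal{T}_{t,0}^{\mu,\nu}$ and $\mathcal{T}_{t,0}^{\tilde\mu,\nu}$, estimating $\|\mathcal{T}_{t,0}^{\mu,\nu}-\mathcal{T}_{t,0}^{\tilde\mu,\nu}\|_{L^\infty}$ through \eqref{eqn_flow_bounded_Lip} and Lemma \ref{lm_67_gwd}, which converts the field difference into $\int_0^t W_1^{1,1}(\mu(s),\tilde\mu(s))\,\mathrm{d}s$. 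For the Duhamel term I would treat separately the discrepancy coming from the two different sources, controlled by $(S_2)$, and the discrepancy from transporting the same injected mass along the two different flows, again controlled by \eqref{eqn_flow_bounded_Lip} and Lemma \ref{lm_67_gwd}. Collecting the bounds yields an inequality of the form $W_1^{1,1}(\mathcal{F}(\mu)(t),\mathcal{F}(\tilde\mu)(t))\le C\int_0^t W_1^{1,1}(\mu(s),\tilde\mu(s))\,\mathrm{d}s$, so that $\mathcal{F}$ (equivalently, $\mathcal{F}$ in the exponentially weighted norm $\sup_t e^{-\lambda t}W_1^{1,1}$, or a suitable iterate) is a contraction, and the Banach fixed-point theorem gives existence and uniqueness on all of $[0,T]$.

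I expect the main obstacle to be precisely this contraction step, because the velocity field is nonlocal in the unknown \emph{and} the source is itself $\mu$-dependent: both the mass transported from $\mu_0$ and the mass injected at every intermediate time $s$ are carried by flows that differ when $\mu$ is perturbed, and keeping these two effects simultaneously under control while paying only a time-integral of $W_1^{1,1}$ is the delicate accounting. Finally, I would record the regularity of the fixed point: since the field is locally Lipschitz by $(H1)$, each $\mathcal{T}_{t,s}^{\mu,\nu}$ is a bi-Lipschitz homeomorphism on $B(0,R_T)$ and therefore maps compactly supported absolutely continuous measures to measures of the same type. As $\mu_0\in\mathcal{M}_0^{ac}$ and, as is the case for the lane-changing source, $S$ takes values in $\mathcal{M}_0^{ac}$, both terms of $\mathcal{F}(\mu)(t)$ lie in $\mathcal{M}_0^{ac}(\mathbb{R}^{2d})\cap\mathcal{M}^1(\mathbb{R}^{2d})$, a property preserved under the time integral, which gives the stated $\mu(t)\in\mathcal{M}_0^{ac}(\mathbb{R}^{2d})\cap\mathcal{M}^1(\mathbb{R}^{2d})$.
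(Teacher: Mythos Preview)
Your proposal is correct in outline, but it follows a genuinely different route from the paper. The paper does not argue via a Duhamel fixed-point equation and the Banach contraction principle; instead it cites \cite{piccoli2014generalized} and sketches a \emph{sample-and-hold Lagrangian scheme}: one discretizes $[0,T]$ dyadically with step $\Delta t=T/2^k$, defines an explicit approximate solution by alternately transporting along the frozen flow and adding $\Delta t\,S(\cdot)$ at each node,
\[
\mu_k((n+1)\Delta t)=\mathcal{T}_{\Delta t}^{\mu_k(n\Delta t),\nu(n\Delta t)}\#\mu_k(n\Delta t)+\Delta t\,S(\mu_k(n\Delta t)),
\]
and then shows that $(\mu_k)_k$ is Cauchy in $W_1^{1,1}$ and converges to the unique weak solution as $k\to\infty$.

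The two approaches use the same ingredients---the Lipschitz stability of the flow \eqref{eqn_flow_bounded_Lip}, Lemmas \ref{lm_66_gwd}--\ref{lm_67_gwd}, and hypotheses $(S_1),(S_2)$---but package them differently. Your contraction argument is cleaner conceptually and gives existence and uniqueness in one stroke; the paper's scheme is more constructive and, importantly, is exactly the discrete object that reappears later in the proof of Lemma~4.1 (the stability estimate \eqref{eqn_gdw_inequality_new}), where the authors literally manipulate the sample-and-hold iterates $\mu_k^{j,q}$. So the paper's choice dovetails with its subsequent use, whereas yours would require rephrasing that later argument. One small caveat: for the conclusion $\mu(t)\in\mathcal{M}_0^{ac}$ you correctly note that one needs $S$ to return absolutely continuous measures, which is not part of $(S_1)$--$(S_2)$ as stated; the paper is tacitly relying on the structure of the specific lane-changing source \eqref{Macro-Macro}, as you observe.
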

One can construct a weak solution $\mu(t)$ to equation \eqref{eqn_source} based on a Lagrangian scheme by sample-and-hold. Given a fixed $k \in \mathbb{N}^{+}$, define $\Delta t \colon = \frac{T}{2^k}$ and decompose the time interval $[0,T]$ in $[0, \Delta t], [\Delta t, 2\Delta t],\dots, [(2^k-1)\Delta t, 2^k\Delta t]$. We define\\ 
$
\mu_k(0) \colon = \mu_0$;\\
$\mu_k((n+1)\Delta t) \colon = \mathcal{T}_{\Delta t}^{\mu_k(n \Delta t), \nu(n\Delta t)}\# \mu_k(n\Delta t) + \Delta t S(\mu_k(n \Delta t))$;\\
$\mu_k(t) \colon = \mathcal{T}_{\tau}^{\mu_k(n\Delta t), \nu(n\Delta t)}\#\mu_k(n \Delta t) + \tau S(\mu_k(n \Delta t))
$,\\
where $n$ is the maximum integer such that $t - n\Delta t \geq 0$
and $\tau \colon = t - n \Delta t$.
Then $\mu(t)= \lim \limits_{k \to \infty} \mu_k(t)$ is the unique weak solution to equation \eqref{eqn_source}. For more detail, please see \cite{piccoli2014generalized}.

\subsection{A Revised Version of Arzelà–Ascoli Theorem} In this subsection, 
we will provide a proof to a revised version of Arzelà–Ascoli theorem. 
\begin{theorem}
\label{revised_AA}
Let $K$ be a compact subset of $\mathbb{R}$ and let $D$ be a complete and totally bounded metric space with metric $d$. Consider a sequence of functions $\{f_n\}_{n=1}^{\infty}$ in $C(K; D)$. If there exists $L>0$, such that the following is true: for any $\varepsilon>0$, there exists $N>0$, such that, whenever $n \geq N$, 
\[d(f_n(t), f_n(s)) \leq L|t-s| + \min\{\varepsilon, |t-s|\}, \forall s, t \in K\]
then the sequence $\{f_n\}_{n=1}^{\infty}$ has a uniformly convergent sub-sequence. 
\end{theorem}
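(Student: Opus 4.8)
The plan is to reduce the statement to the classical Arzelà–Ascoli theorem applied to a suitable tail of the sequence. The first observation is that, since $D$ is complete and totally bounded, it is compact; consequently the pointwise precompactness hypothesis of the classical theorem holds automatically, because for each $t \in K$ the set $\{f_n(t)\}_n$ already lives in the compact space $D$. The only remaining ingredient I need is a genuine form of equicontinuity, and this is exactly what the hypothesis delivers once the parameter $\varepsilon$ is frozen.

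First I would fix $\varepsilon = 1$ and invoke the hypothesis to obtain an index $N_0$ such that, for every $n \geq N_0$ and all $s,t \in K$,
\[
d(f_n(t), f_n(s)) \leq L|t-s| + \min\{1, |t-s|\}.
\]
The crucial point is the $\min$ term: whenever $|t-s| \leq 1$ it equals $|t-s|$, so the bound collapses to the Lipschitz-type estimate $d(f_n(t), f_n(s)) \leq (L+1)|t-s|$. Hence the tail $\{f_n\}_{n \geq N_0}$ is equicontinuous uniformly in $n$: given $\eta > 0$, the choice $\delta = \min\{1, \eta/(L+1)\}$ guarantees $d(f_n(t), f_n(s)) < \eta$ whenever $|t-s| < \delta$ and $n \geq N_0$. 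Note that a \emph{single} value of $\varepsilon$ suffices; there is no need to send $\varepsilon \to 0$ (which would force $N_0 \to \infty$), since the $\min$ structure already encodes the correct modulus of continuity on the small scales relevant for equicontinuity.

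With equicontinuity of the tail and compactness of $D$ in hand, I would apply the classical Arzelà–Ascoli theorem to the reindexed sequence $g_m := f_{N_0 + m - 1}$, $m \geq 1$, which lies in $C(K;D)$, is equicontinuous, and is pointwise precompact. This produces a uniformly convergent subsequence of $\{g_m\}$, which is by construction a subsequence of the original $\{f_n\}$, finishing the argument. If instead a self-contained proof avoiding the classical theorem is preferred, the same two facts feed the standard diagonal extraction: choose a countable dense subset $\{t_j\}$ of $K$, extract by diagonalization a subsequence converging at every $t_j$ using sequential compactness of $D$, and then upgrade this pointwise convergence to uniform convergence by covering $K$ with finitely many $\delta$-balls (by compactness of $K$) and invoking the tail equicontinuity on each.

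The one step deserving care, and the conceptual heart of the argument, is the passage from the stated bound to true equicontinuity. It is tempting to read off the estimate $L|t-s| + \varepsilon$ in the regime $|t-s| > \varepsilon$ and conclude that the functions are only \emph{approximately} equicontinuous; but the term $\min\{\varepsilon, |t-s|\}$ is designed precisely so that on small separations the bound is a clean Lipschitz estimate with constant $L+1$, independent of the fixed choice of $\varepsilon$. Once this is recognized and the target space is seen to be compact, everything else is routine.
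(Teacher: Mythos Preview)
Your argument is correct, and in fact it is sharper than the paper's own proof. The paper proceeds by a hands-on diagonal extraction: it enumerates a countable dense subset $S\subset K$, extracts (via sequential compactness of $D$) a diagonal subsequence $\{g_n\}$ converging at every point of $S$, and then, for each target accuracy $\varepsilon$, invokes the hypothesis with the parameter $\varepsilon/6$ to obtain an index $N_1(\varepsilon)$ beyond which the oscillation estimate holds; a finite cover of $K$ by balls of radius $\delta=\min\{\varepsilon/(6L),\varepsilon/6\}$ centered at points of $S$ then yields the uniform Cauchy property.

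Your route differs in one decisive observation: since $\min\{\varepsilon,|t-s|\}\leq |t-s|$ for \emph{all} $s,t$, a single application of the hypothesis with any fixed $\varepsilon$ (you take $\varepsilon=1$) already makes the tail $\{f_n\}_{n\geq N_0}$ uniformly $(L+1)$-Lipschitz on all of $K$, not merely on small scales. This collapses the problem to the classical Arzel\`a--Ascoli theorem applied to that tail, with no need to vary $\varepsilon$ or to run the diagonal argument explicitly. What you gain is brevity and a clearer picture of the hypothesis: its apparent ``approximate equicontinuity for large $n$'' is in fact a genuine uniform Lipschitz condition on a tail. What the paper's approach buys is self-containment---it does not cite the classical theorem as a black box---but at the cost of re-deriving it while simultaneously managing the $\varepsilon$-dependent index $N_1$, which your observation shows is unnecessary.
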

\begin{proof}
First note that the subset $S=K\cap \mathbb{Q}$ of set $K \subset \mathbb{R}$ is countable and dense, that is, $K$ is separable. We list the countably many elements of $S$ as $\{t_1, t_2, t_3, \dots\}$. 

We will find a sub-sequence of $\{f_n\}$ that converges point-wise on $S$ by a standard diagonal argument. 

Since $D$ is complete and totally bounded, $D$ is sequentially compact. Thus the sequence $\{f_n(t_1)\}_{n=1}^{\infty}$ in $D$ has a convergent sub-sequence, which we will write using double subscripts, $\{f_{1,n}(t_1)\}_{n=1}^{\infty}$. Similarly, the sequence $\{f_{1,n}(t_2)\}_{n=1}^{\infty}$ also has a convergent sub-sequence $\{f_{2,n}(t_2)\}_{n=1}^{\infty}$. By proceeding in this way, we obtain a countable collection of sub-sequences of our original sequence $\{f_{n}\}_{n=1}^{\infty}$: 

\[\begin{array}{cccc}
f_{1,1} & f_{1,2} & f_{1,3}, & \dots\\
f_{2,1} & f_{2,2} & f_{2,3}, & \dots\\
f_{3,1} & f_{3,2} & f_{3,3}, & \dots\\
\vdots & \vdots & \vdots & \dots
\end{array}
\]
where the sequence in the $n$-th row converges at the points $t_1, t_2, \dots, t_n$, and each row is a sub-sequence of its previous row. Let $\{g_n\}$ be the diagonal sequence produced in the previous step, i.e., $g_n = f_{n,n}$ for each $n \in \mathbb{N}$. Then the sequence $\{g_{n}\}$ is a sub-sequence of the original sequence $\{f_n\}$ that converges at each point of $S$. 

Next, we will show that the sub-sequence $\{g_n\}$
of $\{f_n\}$ is uniformly convergent. 
Let $\varepsilon>0$ be given and choose $\delta = \min\left\{\frac{\varepsilon}{6L}, \frac{\varepsilon}{6}\right\}$. Then there exists $N_1>0$, such that for every $n \geq N_1$, and for any $s,t \in K$ with $|s-t|<\delta$, 
\begin{align*}
d(g_n(t), g_n(s)) & \leq L|t-s| + \min\{\frac{\varepsilon}{6}, |t-s|\}
\leq L\delta + \frac{\varepsilon}{6} \leq \frac{\varepsilon}{3}.
\end{align*}

Since $K$ is compact, for any positive integer $M>\frac{1}{\delta}$, there exists a finite set $S_M \subset S$ such that 
$K \subset \bigcup\limits_{s\in S_M}B_{\frac{1}{M}}(s)$. Since the sequence $\{g_n\}$ converges at each point of $S_M$, there exists $N_2>0$, such that whenever $n, m > N_2$,
\[d(g_n(s), g_m(s)) < \frac{\varepsilon}{3}, \quad \forall s \in S_M.\]
Let $t \in K$ be arbitrary but fixed. Then there exists some $s \in S_M$ such that $|s-t|< \delta$. In addition, let $N = \max\{N_1, N_2\}$. Then whenever $n ,m > N$, 
\begin{align*}
d(g_n(t),g_m(t)) & \leq d(g_n(t), g_n(s))+d(g_n(s), g_m(s))+d(g_m(s), g_m(t))< \epsilon.
\end{align*}
Hence the sub-sequence $\{g_n\}$ of the original sequence $\{f_n\}$ is uniformly Cauchy. Since the metric space $D$ is complete, $C(K; D)$ is complete with respect to the uniform metric. Thus the sub-sequence $\{g_n\}$ is uniformly convergent. 
\end{proof}

\section{The Finite-Dimensional Hybrid System}
\label{sec_finite_dimen_hybrid}
In this section, we specify the Bando-FtL model introduced in Section~\ref{sec_models} to the case of \revision{multi-lane and multi-class vehicles with} lane changing maneuvers\revision{, leading to a finite-dimensional hybrid system, cf.~Section~\ref{ssec:hybrid}. Then, we study existence and uniqueness of solutions to the resulting hybrid system.}

\subsection{\revision{The model}}
In the case of multi-lane traffic, vehicles travel along multiple lanes with the possibility to change lane paying a cost related to such maneuver. We consider $m$ lanes and assume that $j \in J =\{1, \dots, m\}$ is the index of lanes. Now, to each vehicle $i$, we associate \revision{an extended vector of indices $\iota(i) = (i, j, \underline{i}_L, \underline{i}_F)$, where $j \in J$ is the lane index of vehicle $i$, while $\underline{i}_L=(i_L^j,i_L^{j+1},i_L^{j-1})$ and $\underline{i}_F=(i_F^j,i_F^{j+1},i_F^{j-1})$ are the vectors of the leader and follower indices, respectively, of vehicle $i$ on its current lane $j$ and of an hypothetical vehicle with same position as $i$ but on the two adjacent lanes $j+1, j-1$ (to the left and right of lane $j$, respectively.) If there is no lane to the left or to the right, we assume to label the corresponding index as $0$. Similarly, in the case of no leader or follower.\\
\indent
Let $(x_i,v_i)\in\mathbb{R}\times\mathbb{R}^+_0$ be again the space-velocity variables of the vehicle $i$. Similarly to~\eqref{L_F_labels}, we define $i_L^j$ and $i_F^j$ as
\begin{equation} \label{L_F_labels_multilane1}
i_L^{j} = \underset{\substack{k\in\{1,\dots,P\}\\ \pi_2(\iota(k))=j\\x_k> x_i}}{\arg\min} x_k-x_i, \quad i_F^{j} = \underset{\substack{k\in\{1,\dots,P\}\\ \pi_2(\iota(k))=j\\x_k< x_i}}{\arg\min} x_i-x_k,
\end{equation}
where $\pi_2$ is the projection of $\iota(i)$ on its second argument.
Instead, for $j'=j,j+1,j-1$, we define
\begin{equation} \label{L_F_labels_multilane}
i_L^{j'} = \underset{\substack{k\in\{1,\dots,P\}\\ \pi_2(\iota(k))=j'\\x_k\geq x_i}}{\arg\min} x_k-x_i, \quad i_F^{j'} = \underset{\substack{k\in\{1,\dots,P\}\\ \pi_2(\iota(k))=j'\\x_k\leq x_i}}{\arg\min} x_i-x_k.
\end{equation}
We observe that with the previous definition we identify the leader and the follower in lane $j'$ being the same if there is a vehicle next to $i$ having the same $x$ position.}

Each individual vehicle has a continuous dynamic governed by system \eqref{Bando-FTL convolution} before performing lane changing. Discrete dynamics of the vehicles will be generated due to lane changing. The presence of both continuous dynamics and discrete dynamics leads us to consider hybrid system, see \cite{ garavello2005hybrid, piccoli1998hybrid}.

In particular, in the following we consider two classes of vehicles and split the population of $P$ vehicles into $M$ autonomous vehicles and $N$ human-driven vehicles on an open stretch of road with $m$ lanes. We let $M_j$ and $N_j$ be the number of autonomous vehicles and the number of human-driven vehicles on lane $j \in J=\{1, \dots, m\}$, respectively. Then 
$\sum \limits_{j=1}^{m} M_j = M \text{ and } \sum\limits_{j=1}^{m} N_j = N$.

First, we study the dynamics of the $M+N$ vehicles from the microscopic point of view. As in \cite{fornasier2014mean}, we assume that we have \revision{a large amount $N$ of human-driven vehicles and} a small amount $M$ of autonomous vehicles that have a great influence on the population. \revision{This influence is modelled by controlled dynamics for the $M$ autonomous vehicles.}

\revision{In order to specify the continuous and discrete dynamics of the vehicles, we introduce the following labeling: the autonomous vehicles are identified by labels $i\in\{1,\dots,M\}$, whereas the human-driven vehicles are identified by labels $i\in\{M+1,\dots,M+N\}$. Furthermore,} we consider the following atomic measures in $\mathcal{M}^{+}(\mathbb{R}\times\mathbb{R}^{+})$ on \revision{ lane $j$}
\revision{
\begin{align}
\label{atomic_nu}
&\mu_{N_j} (t)= \frac{1}{N_j} \sum_{\substack{i\in\{M+1,\dots,M+N\}\\ \pi_2(\iota(i))=j}} \delta_{(x_i(t), v_i(t))},
\quad \nu^{j} (t)= \frac{1}{M_j} \sum_{\substack{i\in\{1,\dots,M\}\\ \pi_2(\iota(i))=j}} \delta_{(x_i(t), v_i(t))}.
\end{align}
}

\subsubsection{\revision{The continuous dynamics}}

From the microscopic point of view, the dynamics of \revision{each vehicle $i$} on lane $j \in J$\revision{, i.e.~with $\pi_2(\iota(i))=j$,} without lane changing are
\revision{
\begin{equation}\label{ODEs}
\begin{aligned}
\dot{x}_i &= v_i,\\
\dot{v}_i &= \left(H_1*_1(\mu_{N_j} + \nu^{j})+H_2*(\mu_{N_j}+\nu^j)\right)(x_i, v_i) + u_i^j, 
\end{aligned}
\end{equation}
where $u_i^j \colon [0, T] \to \mathbb{R}$, and $u_i^j \equiv 0$ if $i\in \{M+1, \dots, M+N\}$. The control term $u_i^j$, related to a car $i$ on lane $j$, is introduced in order to differ the dynamics of the two populations of vehicles. The introduction of the control term models the controlled behavior of the autonomous vehicles. The control can be chosen in order to reduce the instability of the effects in traffic flow, see \cite{GongPiccoliEtAl2020}. } \revision{As observed in Section~\ref{sec_models}, the explicit formulation of the convolution kernels $H_1$ and $H_2$ depends on the modeling of the follow-the-leader term and of the Bando term.
Here, we keep them general and we require that $H_1 \colon \mathbb{R}\times \mathbb{R}^{+} \to \mathbb{R}$, $H_2\colon \mathbb{R}\times \mathbb{R} \to \mathbb{R}$ are locally Lipschitz convolution kernels with sub-linear growth.} Particularly, there exists a constant $C>0$ such that for all $(x_1, v_1) \in \mathbb{R}\times \mathbb{R}^{+}$ and $(x_2, v_2) \in \mathbb{R} \times \mathbb{R}$, 
\begin{equation}
\label{sublinear}
|H_1(x_1,v_1)| \leq C(1+|(x_1, v_1)|) \text{ and } |H_2(x_2,v_2)| \leq C(1+|(x_2, v_2)|).
\end{equation}

\subsubsection{\revision{Lane changing maneuvers}}
Let $\Delta>0$ be fixed. Vehicle $n$ on $j \in J$ lane will perform \revision{a} lane \revision{changing to $j' =j+1\in J \text{ or } j'=j-1\in J$} lane at time $t \in [0, T]$ if the following conditions occur:
\begin{align*}
&\text{Safety: } \bar{a}_n^{j'}(t) \geq -\Delta \text{ and } \revision{\bar{a}_{n_F^{j'}}^{j'}(t) \geq -\Delta};
&\text{Incentive: } \bar{a}_n^{j'}(t) \geq a_n^j(t)+\Delta.
\end{align*}
\revision{Furthermore, we assume that $p([\min\{\bar{a}_n^{j'},\bar{a}_{n_F^{j'}}^{j'}\} + \Delta]_{+},[\bar{a}_n^{j'}-a_n^j - \Delta]_{+})$ represents the probability of vehicle $n$ performing lane changing from lane $j$ to $j'$, where $p\colon \mathbb{R} \times \mathbb{R} \to [0,1]$ is an increasing function with respect to both variables.}
Here \revision{$n_F^{j'}$} is the index of the first vehicle following vehicle $n$ on the new lane if vehicle $n$ changes lane at time $t$, $a_n^j(t)$ is the actual acceleration of vehicle $n$ at time $t$ on the $j$-th lane, $\bar{a}_n^{j'}(t)$ and \revision{$\bar{a}_{n_F^{j'}}^{j'}(t)$} are the expected accelerations of vehicle $n$ \revision{and ${n_F^{j'}}$, respectively, on lane $j'$} if vehicle $n$ changes lane at time $t$. For instance, if vehicle $n$ is an autonomous vehicle\revision{, i.e.~$n\in\{1,\dots,M\}$ with $\pi_2(\iota(n))=j$,} and if vehicle \revision{$n_F^{j'}$} is a human-driven vehicle, then at time $t \in [0, T]$,
\revision{
\begin{align*}
a_n^j(t) &= \left(H_1*_1(\mu_{N_j} (t)+ \nu^{j}(t))+H_2*(\mu_{N_j}(t)+\nu^j(t))\right)(x_n(t), v_n(t))+u_n^j(t), \\
\bar{a}_n^{j'}(t) &= \left(H_1*_1(\mu_{N_{j'}} (t)+ \bar{\nu}^{j'}(t))+H_2*(\mu_{N_{j'}}(t)+\bar{\nu}^{j'}(t))\right)(x_n(t), v_n(t))+u_n^{j'}(t),\\
\bar{a}_{n_F^{j'}}^{j'}(t) &= \left( H_1 *_1(\mu_{N_{j'}}(t)+\bar{\nu}^{j'}(t)) + H_2 *( \mu_{N_{j'}}(t)+\bar{\nu}^{j'}(t)) \right) (x_{n_F^{j'}}(t), v_{n_F^{j'}}(t)),
\end{align*}
where \[\bar{\nu}^{j'} = \frac{1}{M_{j'}+1} \left(\sum\limits_{\substack{\ell\in\{1,\dots,M\}\\ \pi_2(\iota(\ell))=j'}} \delta_{(x_\ell(t), v_\ell(t))}+\delta_{\left(x_n(t), v_n(t)\right)}\right).\]}

\revision{We discuss here the choices of the lane changing rules. The safety condition models the situation in which the reference vehicle $i$ will perform a lane change if there is enough space not to cause an extreme deceleration of itself and of its follower on lane $j'$. Instead, the incentive condition models the situation in which the reference vehicle $i$ will perform a lane change if difference of the accelerations on the new lane $j'$ and on the current lane $j$ is larger than the threshold $\Delta$. By considering acceleration for the lane changing condition, a vehicle needs to take into account simultaneously the space gap, velocity and velocity difference with its leading vehicle on the current and adjacent lane. Similar choices have been already considered in the literature, e.g.~see~\cite{KTH07}.}

\revision{Throughout the paper we make use of the following assumption on the lane changing maneuvers.
\begin{assumption}
We assume that there are no two vehicles changing lane at the same time. We assign each vehicle a timer over the whole time interval $[0, T]$. Let $N_{\tau} \in \mathbb{Z}^{+}$ be large and fixed and let $T_1= \frac{T}{N_{\tau}}$. A vehicle would consider changing lane only when its timer reaches to $T_1$. Here $T_1$ is called timer limit for all vehicles. Specifically, the timer $\tau_i$ for vehicle $i$ satisfies 
$\dot{\tau}_i =1, \tau_{i}(0) = \tau_{i,0} \in [0, T_1)$. In addition, the following is true: 
\begin{equation}\label{initial timer} 
\begin{aligned}
& \text{ if } k_1 \not = k_2 \in \{1, \dots, M+N\}, \text{ then } \tau_{k_1, 0} \not = \tau_{k_2, 0}.
\end{aligned}
\end{equation}
Besides, we reset the timer for each vehicle to be zero when it reaches to $T_1$.
\end{assumption}
Although the previous assumption might look restrictive, we point out that experimental studies have shown that lane changing are not frequent in real traffic. Therefore, the probability of having two vehicles changing lane at the same time is low. However, considering models for lane changes are important since it has been analyzed using safety measures, such as time-to-collision, that traffic safety is influenced by the flow across lanes. We refer to~\cite{dataset,HertyVisconti2018} for detailed discussions. Furthermore, we observe that the lane changing frequency can be arbitrarily increased by taking small values of $T_1$.}

\revision{From the mathematical point of view, the introduction of the cool down time $T_1$ will allow us to study qualitative properties over small time intervals when there is no lane changing. Since the number of lane changing is finite and lane changing only may occur at specific time, one can extend the small time interval into the whole time interval $[0, T]$ by repeating the procedure finitely many times. 
}
\revision{
\begin{example} \label{exampleMultilane}
    We present here a summary example of the basic working principles of the present finite-dimensional hybrid system.
    For the sake of simplicity, we restrict the focus on the case of a highway with two lanes, i.e.~$m=2$ and $J=\{1,2\}$. We assume that there are five vehicles on the road, i.e.~$P=5$, three of them flowing on lane $1$ and two on lane $2$. We refer to Figure~\ref{fig:schematicHybrid} for a schematic representation of the case study.
    \begin{figure}
        \centering
        \includegraphics[width=0.49\textwidth]{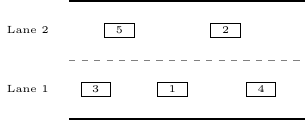}
        \caption{Schematic representation of the Example~\ref{exampleMultilane}.\label{fig:schematicHybrid}}
        \label{fig:my_label}
    \end{figure}
    The reference vehicle is identified by the label $i=1$ and travels on lane $j=1$. Its vector of indices defined by $\iota(1)=(1,1,\underline{1}_L,\underline{1}_F)$, with $\underline{1}_L=(4,2,0)$ and $\underline{1}_F=(3,5,0)$. In fact, vehicle labeled by $4$ is the leader in lane $1$, whereas vehicle labeled by $2$ is the leader in lane $2$. We observe that the last entry of $\underline{1}_L$ is $0$ because there is no right lane for the reference vehicle $1$, and so no leader on the right. Furthermore, and without loss of generality, we assume that the reference vehicle is the only autonomous on the road, namely $N=4$, with $N_1=N_2=2$, and $M=1$, with $M_1=1$, $M_2=0$. The vectors associated to the other, human-driven, vehicles are
    \begin{align*}
        \iota(2) &= (2,2,\underline{2}_L,\underline{2}_F), \quad \underline{2}_L = (0,0,4), \ \underline{2}_F = (5,0,1) \\
        \iota(3) &= (3,1,\underline{3}_L,\underline{3}_F), \quad \underline{3}_L = (1,5,0), \ \underline{3}_F = (0,0,0)\\
        \iota(4) &= (4,1,\underline{4}_L,\underline{4}_F), \quad \underline{4}_L = (0,0,0), \ \underline{4}_F = (1,0,2) \\
        \iota(5) &= (5,2,\underline{5}_L,\underline{5}_F), \quad \underline{5}_L = (2,0,1), \ \underline{5}_F = (0,0,3)
    \end{align*}
    Lane changing conditions for the reference vehicle $1$ become
    \begin{align*}
        &\text{Safety: } \ \bar{a}_1^{2}(t) \geq -\Delta \text{ and } \bar{a}_{5}^{2}(t) \geq -\Delta;
        &\text{Incentive: } \ \bar{a}_1^{2}(t) \geq a_1^1(t)+\Delta,
    \end{align*}
    where the acceleration functions are defined by
    \begin{align*}
        a_1^1(t) &= \left(H_1*_1(\mu_{N_1} (t)+ \nu^{1}(t))+H_2*(\mu_{N_1}(t)+\nu^1(t))\right)(x_1(t), v_1(t))+u_1^1(t), \\
        \bar{a}_1^{2}(t) &= \left(H_1*_1(\mu_{N_{2}} (t)+ \tilde{\nu}^{2}(t))+H_2*(\mu_{N_{2}}(t)+\bar{\nu}^{2}(t))\right)(x_1(t), v_1(t))+u_1^{2}(t),\\
        \bar{a}_{5}^{2}(t) &= \left( H_1 *_1(\mu_{N_{2}}(t)+\tilde{\nu}^{2}(t)) + H_2 *( \mu_{N_{2}}(t)+\bar{\nu}^{2}(t)) \right) (x_{5}(t), v_{5}(t)),
    \end{align*}
    with
\[
        \mu_{N_1} (t) = \frac{1}{2} \sum_{i\in\{3,4\}} \delta_{(x_i(t), v_i(t))},\quad
        \nu^{1} (t) = \delta_{(x_1(t), v_1(t))}, 
\]
\[
        \mu_{N_2} (t) = \frac{1}{2} \sum_{i\in\{2,5\}} \delta_{(x_i(t), v_i(t))},\quad
        \nu^{2} (t) = 0,\quad
        \bar{\nu}^{2} = \delta_{\left(x_1(t), v_1(t)\right)}.
\]
    Assume that for the reference vehicle $1$ the timer $\tau_1$ has reached the timer limit $T_1$. Then, it would consider to change lane if the conditions are fulfilled. In that case, the reference vehicle changes lane going to lane $2$ and the sets of indices of vehicles modify as
    \begin{align*}
        \iota(1) &= (1,2,\underline{1}_L,\underline{1}_F), \quad \underline{1}_L = (2,0,4), \ \underline{1}_F = (5,0,3) \\
        \iota(2) &= (2,2,\underline{2}_L,\underline{2}_F), \quad \underline{2}_L = (0,0,4), \ \underline{2}_F = (1,0,3) \\
        \iota(3) &= (3,1,\underline{3}_L,\underline{3}_F), \quad \underline{3}_L = (0,5,0), \ \underline{3}_F = (0,0,0)\\
        \iota(4) &= (4,1,\underline{4}_L,\underline{4}_F), \quad \underline{4}_L = (0,0,0), \ \underline{4}_F = (0,0,2) \\
        \iota(5) &= (5,2,\underline{5}_L,\underline{5}_F), \quad \underline{5}_L = (1,0,4), \ \underline{5}_F = (0,0,3).
    \end{align*}
\end{example}
}

\revision{The presence of both continuous dynamics of vehicles governed by system \eqref{ODEs} and discrete dynamics of vehicles caused by lane changing motivates us to consider \revision{a} finite-dimensional hybrid system. }

\subsection{\revision{Existence and uniqueness of solutions to the finite-dimensional hybrid system}}

For the definition of a hybrid system for multilane traffic, we need to introduce the following notation. \revision{We introduce $X \coloneqq \mathbb{R}\times \mathbb{R}_{\geq 0} \times [0, T_1)$, $\mathcal{I} \coloneqq \{1,\dots,M+N\}$, and $\ell \in\mathbb{R}^{M+N}$ such that $\ell_i=\pi_2(\iota(i))$, $i\in\mathcal{I}$, is the label of the lane of vehicle $i$. Then, we further define, 
\begin{equation}
\label{A_l}
A_\ell \coloneqq \left\{(x_i, v_i, \tau_i)_{i\in\mathcal{I} } \in X \colon \exists i \not = k \in \mathcal{I}, t\in [0, T], \text{ s.t. } \ell_{i} (t)= \ell_{k}(t) \wedge x_i(t) = i_k(t) \right\},
\end{equation}
the set of states such that two cars
are in same lane and position.}

\revision{
\begin{definition}
\label{Def_finite_HS_new}
A hybrid system modeling multilane traffic
is given by a $6$-tuple $\Sigma_1 = (\mathcal{L}, \mathcal{M}, U,\mathcal{U},g, S)$ where:
\vspace{0.5em}
\begin{enumerate}\setlength\itemsep{0.5em}
\item[(1)]  $ \mathcal{L} = \left\{\ell = (\ell_i), \ell_i \in J, i\in\mathcal{I} \right\}$ is a finite set of symbols that represent all possible lane labels of all vehicles;
\item[(2)]  $\mathcal{M} =\{\mathcal{M}_{\ell}\}_{\ell \in \mathcal{L}}$, 
where $\mathcal{M}_{\ell} = (X \setminus A_{\ell})^{M+N}$,
with $A_{\ell}$ defined as in \eqref{A_l}. 
\item[(3)]  $U= \left\{ U_{\ell}\right\}_{\ell \in \mathcal{L}}$,
$U_{\ell} = I^{M}$, where $ I \subset [-U_{\max}, U_{\max}]$ is compact with $U_{\max}>0$;
\item[(4)]  $\mathcal{U}= \left\{ \mathcal{U}_{\ell}\right\}_{\ell \in \mathcal{L}}$, $\mathcal{U}_{\ell} = \left\{ u \colon [0, T] \to U_{\ell} =I^{M}\right\}$;
\item[(5)]  $g = \{g_{\ell}\}_{\ell \in \mathcal{L}}$,
$g_{\ell} \colon \mathcal{M}_{\ell} \times \mathcal{U}_{\ell} \to \mathbb{R}^{3(M+N)}$, $g_{\ell_i}=(v_i, a_i, 1)$, where
$a_i=\dot{v}_i$ as defined in system \eqref{ODEs};
\item[(6)] $S \text{ is a subset of } LC(\Sigma_1)$, where 
$LC(\Sigma_1)$ is the set of states for which a lane-changing can occur, that is,
\begin{equation*} 
\begin{aligned}
&LC(\Sigma_1)= \left\{\left(\ell, (x_i, v_i,\tau_i),
\ell', ( x_i', v_i', \tau_i') \right)_{ i\in \mathcal{I}} \colon \exists k \in \mathcal{I}, t_{k} \in [0,T], \text{ s.t. } \forall i \not = k, \right. \\
 &\left.(x_i(t_{k}),v_i(t_{k}),\tau_i(t_{k}), \ell_i(t_{k})) = (x_i'(t_{k}),v_i'(t_{k}),\tau_i'(t_{k}), \ell_i'(t_{k})),\right.\\
&\left.\text{ and } (x_{k}(t_{k}), v_{k}(t_{k})) = (x_{k}'(t_{k}), v_{k}'(t_{k})), {\tau_{k}}'(t_{k}) = 0, \right.\\
&\left.\ell_{k}'(t_{k}) = (\ell_{k}(t_{k}) + 1) (1-\delta_{m}(\ell_{k}(t_{k}))) \text{ or } (\ell_{k}(t_{k}) - 1) (1-\delta_{1}(\ell_{k}(t_{k})))\right\}.
\end{aligned}
\end{equation*}
\end{enumerate}
\end{definition}
}

Before actually defining a trajectory of hybrid system $\Sigma_1$, it is necessary to define its hybrid state first. 

\begin{definition}
A hybrid state of the hybrid system $\Sigma_1$ is a \revision{$4$-tuple 
$(\ell, x, v, \tau)$, where $\ell\in\mathcal{L}$ is the location, $(x, v, \tau) \in \mathcal{M}_{\ell}$.} We denote by $\mathcal{HS}_1$ the set of all hybrid states of the hybrid system $\Sigma_1$. 
\end{definition}
Now we will define a trajectory of hybrid system $\Sigma_1$.

\revision{
\begin{definition}
\label{Def_trajectory_HS_1}
Let $(\ell_0, x_0, v_0, \tau_0) \in J^{M+N} \times \mathbb{R}^{M+N} \times {(\mathbb{R}_{\geq 0})}^{M+N} \times [0, \delta_{\tau})^{M+N}$ be given initial condition to the above hybrid system $\Sigma_1$. In addition, assume that the initial conditions $\tau_0$ satisfy condition \eqref{initial timer}. A trajectory of the hybrid system $\Sigma_1$ with initial condition $(\ell_0, x_0, v_0, \tau_0)$ is a map $\xi \colon [0, T] \to \mathcal{HS}_1$, $\xi(t) = (\ell(t),x(t), v(t), \tau(t))$ such that for $i \in \mathcal{I}, \text{ and } n \in \{ 1, \dots, N_{\tau} -1\}$, the following holds: 
\vspace{0.5em}
\begin{enumerate}\setlength\itemsep{0.5em}
\item[(1)] $(x_i(0), v_i(0), \tau_i(0) ) = (x_{i,0}, v_{i,0},  \tau_{i,0}) \in X$;
\item[(2)] $ \ell_i(t) = \ell_{i, 0} \in J$ for $t\in[0, \delta_{\tau} - \tau_{i,0})$, \\
$\ell_i(t)= \ell_{i,n} \in J$ for $t\in[n\delta_\tau - \tau_{i,0}, (n+1)\delta_\tau - \tau_{i,0})$,\\
$\ell_i (t)= \ell_{i,N_{\tau}} \in J$ for $t\in[N_{\tau}\delta_\tau - \tau_{i,0}, T]$;
\item[(3)] $\tau_i(n\delta_{\tau} - \tau_{i,0}) = 0$;
\item[(4)] $\lim\limits_{t \to (n\delta_{\tau} - \tau_{i,0})^{-}}x_{i}(t)= x_i(n\delta_{\tau} - \tau_{i,0})$;
\item[(5)] For almost every $t \in [0, T]$, with $u_i \colon [0, T] \to I$ a measurable control, \\
$\frac{\mathrm{d}}{\mathrm{d}t} (x_i, v_i, \tau_i) = g_{\ell_i(t)}(x_i(t), v_i(t), \tau_i(t), u_i(t))$.
\end{enumerate}
\end{definition}
}

We shall derive the existence and uniqueness of the trajectory of hybrid system $\Sigma_1$ in the sense of Definition \ref{Def_trajectory_HS_1}.
Let $\xi^{j} = (y^j, w^j)$ be the space-velocity of the autonomous vehicles in the $j$-the lane. Recall that we denote by $M_j$ and $N_j$ the number of autonomous vehicles and the number of human-driven vehicles in the $j$-the lane, respectively. 
Compare with Lemma $2.1$ in \cite{fornasier2014mean}, we have the following lemma. 
\begin{lemma}
\label{lemmasublinear}
Given two locally Lipschitz convolution kernels with sub-linear \\
growth $H_1 \colon \mathbb{R} \times \mathbb{R}^{+} \to \mathbb{R}$ and $H_2 \colon \mathbb{R} \times \mathbb{R} \to \mathbb{R}$, and given $\mu_n = \frac{1}{n} \sum\limits_{l=1}^{n} \delta_{(x_l, v_l)}$ an arbitrary atomic measure for $(x_l, v_l) \in \mathbb{R} \times \mathbb{R}^{+}$, with $n \in \mathbb{Z}^{+}$, we have, there exists a constant $C>0$ such that
$$
|H_1*_1 \mu_n(x,v)| \leq C(1+|(x,v)| + \frac{1}{n}\sum\limits_{l=1}^n |(x_l, 0)|)$$
and
$$|H_2*\mu_n(x,v)|\leq C(1+|(x,v)| + \frac{1}{n} \sum\limits_{l=1}^n|(x_l,v_l)|).
$$
\end{lemma}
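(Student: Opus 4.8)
The plan is to estimate each convolution term directly, exploiting the fact that $\mu_n$ is an explicit average of Dirac masses and that $H_1, H_2$ satisfy the sub-linear growth bound \eqref{sublinear}. Since the statement is essentially a pointwise bound on $H_1 *_1 \mu_n$ and $H_2 * \mu_n$, I would avoid any abstract machinery and simply unfold the definition of convolution against an atomic measure, then apply the triangle inequality and the growth hypothesis term by term.

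\emph{First step.} I would write out the convolution explicitly. By definition of the push-forward/convolution against $\mu_n = \frac{1}{n}\sum_{l=1}^n \delta_{(x_l,v_l)}$, we have $H_1 *_1 \mu_n(x,v) = \frac{1}{n}\sum_{l=1}^n H_1(x - x_l, v)$ and $H_2 * \mu_n(x,v) = \frac{1}{n}\sum_{l=1}^n H_2(x-x_l, v-v_l)$, matching the formal computations given after \eqref{eqn_atomic_measure_P}. Then I apply the triangle inequality to pull the sum outside the absolute value, obtaining $|H_1*_1\mu_n(x,v)| \leq \frac{1}{n}\sum_{l=1}^n |H_1(x-x_l,v)|$ and similarly for $H_2$.

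\emph{Second step.} I invoke the sub-linear growth bound \eqref{sublinear} on each summand. For $H_2$, this gives $|H_2(x-x_l, v-v_l)| \leq C(1 + |(x-x_l, v-v_l)|)$, and by the triangle inequality on the Euclidean norm, $|(x-x_l,v-v_l)| \leq |(x,v)| + |(x_l,v_l)|$; averaging over $l$ yields exactly $C(1 + |(x,v)| + \frac{1}{n}\sum_{l=1}^n |(x_l,v_l)|)$ after absorbing constants. For $H_1$, the second argument of the kernel is the \emph{unshifted} velocity $v$ (not $v - v_l$), so the bound reads $|H_1(x-x_l, v)| \leq C(1 + |(x-x_l, v)|) \leq C(1 + |(x,v)| + |(x_l, 0)|)$, where the $|(x_l,0)|$ term accounts for the position shift only; averaging produces the stated bound with $\frac{1}{n}\sum_{l=1}^n|(x_l,0)|$. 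The asymmetry between the two bounds (one has $|(x_l,v_l)|$, the other $|(x_l,0)|$) is precisely dictated by which variables are being convolved, and getting this bookkeeping right is the only subtlety.

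\emph{Main obstacle.} There is no real analytical difficulty here: the result is a direct consequence of sub-linearity plus the triangle inequality, so the proof is short. The point requiring care is simply the distinction in the two convolution variables—$H_1$ convolves only in position while carrying $v$ along, whereas $H_2$ convolves in both position and velocity—which is why the two estimates feature different empirical-measure correction terms. I would state the common constant $C$ as the same constant appearing in \eqref{sublinear} (possibly enlarged by an absolute factor coming from the triangle inequality), and note that the bounds hold uniformly in $n$, which is what makes them useful for the subsequent mean-field limit argument.
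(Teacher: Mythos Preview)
Your proposal is correct and matches the paper's approach exactly: the paper's own proof is the single sentence ``This can be proved by the sub-linear growth of $H_1$ and $H_2$,'' and your argument is precisely the intended unpacking of that hint---expand the convolution against the atomic measure, apply the triangle inequality, invoke \eqref{sublinear} on each summand, and average. Your observation about the $|(x_l,0)|$ versus $|(x_l,v_l)|$ distinction (arising because $*_1$ convolves only in the first variable) is the only bookkeeping point, and you have it right.
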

\begin{proof}
This is a consequence of the sub-linear growth of $H_1$ and $H_2$. 
\end{proof}
As in \cite{fornasier2014mean}, motivated by the $1$-Wasserstein distance, we endow space $\mathbb{R}^{2n}$ for any $n \in \mathbb{Z}^{+}$ with the following norm: for any $(x,v) \in \mathbb{R}^{2n}$, 
$\|(x,v)\| \colon = \frac{1}{n} \sum\limits_{l=1}^{n} \left(|x_l|+|v_l|\right)$,
and the metric induced by the above norm $\|\cdot \|$. 

\begin{theorem} 
\label{thm_traj_HS_1}
Let $H_1 \colon \mathbb{R}\times \mathbb{R}^{+} \to \mathbb{R}$, $H_2\colon \mathbb{R}\times \mathbb{R} \to \mathbb{R}$ be locally Lipschitz convolution kernels with sub-linear growth. Then given an initial datum 
\revision{$\xi_0 = (\ell_0, x_0, v_0, \tau_0)$}, there exists a unique trajectory  \revision{$\xi(t) = (\ell(t), x(t), v(t), \tau(t))$} to the finite-dimensional hybrid system $\Sigma_1$ over the whole time interval $[0, T]$. Furthermore, both trajectories of the autonomous vehicles and the human-driven vehicles are Lipschitz continuous with respect to time over the time interval when there is no lane changing. 
\end{theorem}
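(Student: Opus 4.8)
The plan is to exploit the timer mechanism to reduce the hybrid dynamics to a \emph{finite} concatenation of Carath\'eodory ODE problems, each handled by Theorem \ref{global_existence_uniqueness}. First I would record the structure of the lane-changing (event) times. Each vehicle is eligible to change lane only when its timer reaches the limit $T_1$, and the timer obeys $\dot\tau=1$ with reset to $0$ at $T_1$; hence autonomous vehicle $k$ can change lane only at the times $nT_1-\tau_{k,0}^1$, $n=1,\dots,N_\tau$, and analogously for the human-driven vehicles. Because the initial timers satisfy the separation condition \eqref{initial timer}, no two candidate times coincide: if $n_1 T_1-\tau_{k_1,0}^1=n_2 T_1-\tau_{k_2,0}^1$ then $(n_1-n_2)T_1=\tau_{k_1,0}^1-\tau_{k_2,0}^1$, and since both initial timers lie in $[0,T_1)$ this forces $n_1=n_2$ and $\tau_{k_1,0}^1=\tau_{k_2,0}^1$, contradicting \eqref{initial timer}; the same computation rules out coincidences between any two vehicles, autonomous or human-driven. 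Consequently the candidate event times form a finite set $0=t_0<t_1<\dots<t_K=T$ with $K\le (M+N)N_\tau$, at each of which exactly one vehicle is eligible to change lane. In particular no Zeno phenomenon and no simultaneous transitions can occur.

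Next I would solve the continuous dynamics on each open slab $(t_\ell,t_{\ell+1})$, on which the location $\iota$ is constant and the evolution of $(y,w,\tau^1,x,v,\tau^2)$ is governed by the field $g_\iota$ together with $\dot\tau=1$. I would verify the hypotheses of Theorem \ref{global_existence_uniqueness}: the right-hand side is measurable in $t$ through the measurable controls $u_k\in I\subset[0,U_{\max}]$, and locally Lipschitz in the state, since each convolution term is a finite sum $\frac1{N_j}\sum H_1(y_k-x_i,w_k)+\cdots$ of the locally Lipschitz kernels $H_1,H_2$ evaluated at differences of state components. The sub-linear growth bound $|g_\iota(t,\cdot)|\le C(1+\|\cdot\|)$ follows from Lemma \ref{lemmasublinear} for the convolution part and from compactness of $I$ for the control part, using the Wasserstein-motivated norm $\|\cdot\|$ on $\mathbb{R}^{2n}$ (equivalent to the Euclidean norm in finite dimension). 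Theorem \ref{global_existence_uniqueness} then yields a unique solution on $[t_\ell,t_{\ell+1}]$ together with the a priori estimate $\|(y,w,x,v)(t)\|\le(\|(y,w,x,v)(t_\ell)\|+C(t-t_\ell))e^{C(t-t_\ell)}$.

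I would then assemble the global trajectory by induction on $\ell$. Starting from $\xi_0$ one solves on $[t_0,t_1]$; at the event time $t_1$ the single eligible vehicle evaluates the Safety and Incentive inequalities at its current state -- whose values are fixed by the already-constructed flow -- and the deterministic tie-breaking convention (move to lane $j+1$ when the two candidate accelerations coincide), together with the admissibility of the adjacent lanes, selects a unique updated location $\iota$, while the position-velocity coordinates pass continuously through $t_1$ (trajectory condition 5) and the relevant timer resets to $0$. This defines a single-valued reset map, so the state at $t_1$ is uniquely determined; restarting the ODE from it and iterating over the finitely many slabs produces a trajectory on all of $[0,T]$. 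Uniqueness follows because on each slab the Carath\'eodory solution is unique and each transition is deterministic: any two trajectories must agree on $[t_0,t_1]$, hence share the state at $t_1$, and so on up to $t_K=T$.

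Finally, for the regularity claim, on any interval free of lane changing the a priori bound above confines $(y,w,x,v)$ to a compact set; Lemma \ref{lemmasublinear} and boundedness of the controls then bound $g_\iota$ uniformly, so $\dot y,\dot w,\dot x,\dot v$ are bounded and the trajectories are Lipschitz in time there. The main obstacle I anticipate is not the ODE theory, which is routine via Theorem \ref{global_existence_uniqueness}, but the bookkeeping that the timer structure together with the separation condition \eqref{initial timer} genuinely yields finitely many, pairwise distinct event times with a single eligible vehicle at each; this is precisely what excludes Zeno behaviour and simultaneous switches, and thereby legitimizes both the concatenation construction and the uniqueness argument.
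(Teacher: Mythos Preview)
Your proposal is correct and follows essentially the same strategy as the paper: partition $[0,T]$ into finitely many intervals between potential lane-change times, apply Theorem~\ref{global_existence_uniqueness} on each slab via the sub-linear growth bound from Lemma~\ref{lemmasublinear}, and concatenate. The only cosmetic difference is that the paper defines the switching times iteratively ($t_n$ as the minimum remaining timer from $t_{n-1}$) whereas you enumerate all candidate times $nT_1-\tau^1_{k,0}$, $nT_1-\tau^2_{i,0}$ upfront and use \eqref{initial timer} to prove they are pairwise distinct; your bookkeeping is in fact more explicit about excluding Zeno behaviour and about the deterministic reset, but the underlying argument is the same.
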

\begin{proof}
Let \revision{$t_0 = \min\limits_{
i \in \mathcal{I}}\{T_1 - \tau_{i,0}\}$}. Note that there is no vehicle changing lane over the time interval $[0, t_0)$ in any lane. \revision{In particular, for each $j \in J$, define the set of labels of vehicles on lane $j$ as $\mathcal{I}_j = \{i \in \mathcal{I} \colon \pi_2(\iota(i)) \in J\}$. We recall that for $t \in [0, t_0)$, the dynamics of vehicle $i \in \mathcal{I}_j$ satisfy the system \eqref{ODEs}. 
For the sake of compact writing, we let $\xi^j(t)= (x^j(t), v^j(t)) \in (\mathbb{R}\times \mathbb{R}_{\geq 0})^{M_j+N_j}$ be the trajectories of all vehicles on lane $j$. Namely, $x^j(t)=(x_i(t))_{i\in\mathcal{I}_j}$ is the vector of the positions of vehicles on lane $j$, and similarly for $v^j(t)$. }Then, we re-write system \eqref{ODEs} in the following form 
\begin{equation}
\label{compact_form}
\dot{\xi^j}(t) = g^j(t, \xi^j(t)),
\end{equation}
where the right hand side is 
\revision{
\[g^j(t, \xi^j(t)) = (v^j(t), [\left(H_1*_1(\mu_{N_j} + \nu^{j})+H_2*(\mu_{N_j}+\nu^j)\right)(x_i, v_i)+u_i^j]_{i\in \mathcal{I}_j}),\]
where $u_i^j \equiv 0$ if $i \in \mathcal{I}_j \cap \{M+1, \dots, M+N\}$.}
Since $H_1$ and $H_2$ are locally Lipschitz with sub-linear growth, by Lemma \ref{lemmasublinear}, we obtain
\begin{align*}
& \|g^j(t, \xi^j(t))\| \leq \bar{C}\left(1+\|\xi^{j}(t)\|\right),
\end{align*}
where $\bar{C}>0$ is a constant depending on $C>0$, $U_{\max}>0$, but not depending on $M$ or $N$. 
Thus the right hand side of equation \eqref{compact_form} fulfills the sub-linear growth condition, by Theorem \ref{global_existence_uniqueness}, there exists a solution of system \eqref{compact_form} on the interval $[0, t_0)$ such that \revision{$\xi^j(0)= \xi^j_0=(x_0^j, v_0^j) \in (\mathbb{R} \times \mathbb{R}_{\geq 0})^{M_j + N_j}$}. 
Moreover, for any $t\in [0, t_0)$, 
\[\|\xi^j(t)\| \leq (\|\xi^j_0\| + \bar{C} t_0)e^{\bar{C}t_0}.\]
In addition, the trajectory of the  vehicles in lane $j$ is Lipschitz continuous in time over the interval $[0, t_0)$. That is, for any $\tau_1, \tau_2 \in [0, t_0)$, 
\begin{align*}
\|\xi^{j}(\tau_1) - \xi^{j}(\tau_2)\|& 
\leq \int_{\tau_1}^{\tau_2} \bar{C}(1+\|\xi^j(s)\|)\,\mathrm{d}s \leq \bar{C}(1+ (\|\xi^j_0\| + \bar{C} t_0)e^{\bar{C}t_0})|\tau_1 - \tau_2|.
\end{align*}
Now for $n \geq 1$, let \revision{$t_n = \min\limits_{
i=1, \dots, M+N}\{T_1 - \tau_{i}(t_{n-1})\}$}. Then over the time interval $[t_{n-1}, t_n)$, $n \geq 1$, there is no vehicle changing lane. Similarly, one can show that the trajectory of vehicles in lane $j$ is unique and is Lipschitz continuous in time over the time interval $[t_{n-1}, t_n)$. Since the number of vehicles $M+N$ is finite, one can repeat the above procedure for finitely many times to show that the trajectory of vehicles in lane $j$ is unique over the whole time interval $[0, T]$. 
\end{proof}
\section{The Mean-Field Limit to the Finite-Dimensional Hybrid System}
\label{sec_mean_field_limit_dimen_hybrid}
In this section, we consider \revision{again $M$ autonomous vehicles and $N$ human-driven vehicles. However, due to the large number of the latter compared to the former, we change perspective in the description of the behavior of regular vehicles considering the mean-field limit of their microscopic dynamics on each lane of an open stretch of road with $m$ lanes.} We again just add controls on the $M$ autonomous vehicles. \revision{We introduce the following notations $\mathcal{I}_M=\{1, \dots, M\}$ and for each $j \in J$, $\mathcal{I}_M^j = \{i \in \mathcal{I}_M \colon \pi_2(\iota(i))=j\}$.} It is possible to define a mean-field limit of system \eqref{ODEs} in the following sense: on lane $j \in J$, the population of vehicles can be represented by the vector of positions-velocities \revision{$(x^j, v^j) \in (\mathbb{R}\times \mathbb{R}_{\geq 0})^{M_j}$} of the autonomous vehicles, \revision{where from now on, we set $x^j=(x_i)_{i\in \mathcal{I}_M^j}$ and  $v^j=(v_i)_{i\in \mathcal{I}_M^j}$,} coupled with the compactly supported non-negative measure $\mu^j \in \mathcal{M}^{+}(\mathbb{R} \times \mathbb{R}_{\geq 0})$ of the human-driven vehicles in the position-velocity space. Then the mean-field limit will result in a coupled system of ODEs for \revision{$(x^j, v^j)$} with control and a PDE for $\mu^j$ without control. Furthermore, the lane changing of the human-driven vehicles would lead to a source term to the PDE for $\mu^j$. More specifically, the limit dynamics of vehicles on lane $j$ when there is no autonomous vehicles changing lane is 
\revision{
\begin{subequations}
\label{Mean-flied_limit}
\begin{align}
&\dot{x}_i = v_i; \label{AV_position}\\ 
&\dot{v}_i = \left(H_1*_1(\mu^{j} + \nu^{j})+H_2*(\mu^{j}+\nu^j)\right)(x_i, v_i)+u_i^j, \quad i\in \mathcal{I}_M^j; \label{AV_velocity}\\
&\partial_t \mu^j + v^j \partial_x \mu^j + \partial_{v} \left(\left(H_1*_1 (\mu^j+\nu^{j}) + H_2*(\mu^j+\nu^{j}) \right)\mu^j\right)
=S(\mu^{j-1},\mu^{j}, \mu^{j+1}). \label{RVPDE_HS_2}
\end{align}
\end{subequations}
}
where \revision{$u_i^j \colon [0, T] \to \mathbb{R}$} are measurable controls for \revision{$i \in \mathcal{I}_M^{j}$}, $H_1 \colon \mathbb{R}\times \mathbb{R}^{+} \to \mathbb{R}$ and $H_2\colon \mathbb{R}\times \mathbb{R} \to \mathbb{R}$ are locally Lipschitz convolution kernels with sub-linear growth satisfying equation \eqref{sublinear}, $\nu^{j}$ is as defined in \eqref{atomic_nu} and the source term $S(\mu^{j-1}, \mu^j, \mu^{j-1})$ is defined as
\begin{equation}
\label{source_term}
\begin{aligned}
S(\mu^{j-1},\mu^ {j}, \mu^{j+1}) =& \left(S^{j-1, j}(\mu^{j-1}, \mu^j) - S^{j, j-1}(\mu^{j-1}, \mu^j) \right)(1-\delta_{j,1})\\ 
&+\left(S^{j+1, j} (\mu^j, \mu^{j+1})- S^{j, j+1}(\mu^j, \mu^{j+1})\right)(1-\delta_{j,m}),
\end{aligned}
\end{equation}
with 
\revision{
\begin{equation}
\label{Macro-Macro}
S^{k,l}(\mu^k, \mu^l) = p([A^l + \Delta]_{+}, [A^l - A^k - \Delta]_{+}) \mu^k, k,l \in \{j-1, j, j+1\}
\end{equation}
}
$\text{and } \ k = l +1 \ \text { or } \ k=l -1.$ 
Here $p \colon \mathbb{R}\times \mathbb{R} \to [0, 1]$ is increasing and is the probability of the large population of human-driven vehicles performing lane changing from $k$ lane to $l$ lane. In addition, if $a, b\leq 0$, then $p(a, b) =0$. \revision{For consistency, we need to assume that the dimension of $p$ to be $\left[\mathrm{sec}\right]^{-1}$}. This modeling choice is similar to~\cite{IllnerKlarMaterne,HertyIllnerKlarPanferov}. In addition, $A^{l} = H_1*_1 (\mu^{l}+\nu^{l}) + H_2*(\mu^{l}+\nu^{l})$ is the average acceleration of vehicles on lane $l$. Equation \eqref{Macro-Macro} can be interpreted as the following: Let $\Delta>0$ be fixed. A large population of human-driven vehicles on lane $k$ will perform lane changing to lane $l$ with probability $p \in [0,1]$ if the following condition occur: $A^l > A^k + \Delta$. 

Furthermore, system \eqref{Mean-flied_limit} implies that the acceleration of autonomous vehicle \revision{$i \in \mathcal{I}_M^j$} is, 
\revision{
\begin{equation}
\label{eqn: acc_av}
a_i^j = \left(H_1*_1(\mu^{j} + \nu^{j})+H_2*(\mu^{j}+\nu^j)\right)(x_i, v_i)+u_i^j.
\end{equation}}
The $i$-th autonomous vehicle on lane $j$ will perform lane changing to $j'=(j-1)(1-\delta_1(j))$ or $j'=(j+1)(1-\delta_m(j))$ lane if the following condition occur:
$ A^{j'} \geq a_i^j + \Delta$.

We again assign each autonomous vehicle a timer over the whole time interval $[0,T]$ such that there are no two autonomous vehicles changing lane at the same time. We define the timer \revision{$\tau_i$} for autonomous vehicle \revision{$i \in \mathcal{I}_M$} and the timer limit $T_1$ as before. 

The continuous dynamics of vehicles governed by system \eqref{Mean-flied_limit} and the discrete lane changing dynamics of the autonomous vehicles lead us to consider the following hybrid system. 
\revision{
\begin{definition}
A hybrid ODE-PDE system is a $6$-tuple $\Sigma_2 = (\mathcal{L}, \mathcal{M}, U, \mathcal{U}, g, S)$ where
\begin{itemize}
\item[(1)]$
\mathcal{L} = \left\{\ell = (\ell_i)_{i \in \mathcal{I}_M}, \ell_i \in J\right\} = J^M \text{ is the set of locations} $;
\item[(2)]$\mathcal{M} =\{\mathcal{M}_{\ell}\}_{\ell \in \mathcal{L}}, \text{ where } \mathcal{M}_{\ell} = X^M \setminus A_{\ell} \times \left(\mathcal{M}^{+}(\mathbb{R}^2)\right)^m$ and
\begin{equation*}
\begin{aligned}
    A_\ell = \left\{\left(x_i, v_i, \tau_i \right)_{i \in \mathcal{I}_M} \in X \colon \right. & \exists i_1 \not = i_2 \in \mathcal{I}_M, t \in [0, T], \text{ s.t. }\\ 
    & \left. \ell_{i_1}(t) = \ell_{i_2}(t) \wedge x_{i_1}(t) = x_{i_2}(t) \right\};
\end{aligned}
\end{equation*}
\item[(3)] $U= \left\{ U_{\ell}\right\}_{\ell \in \mathcal{L}}, U_{\ell} = I^{M}, \text{ where } I \subset \mathbb{R} \text{ is compact}$;
\item[(4)] $\mathcal{U}= \left\{ \mathcal{U}_{\ell}\right\}_{\ell \in \mathcal{L}},
\mathcal{U}_{\ell} = \left\{ u \colon [0, T] \to U_{\ell} =I^{M}\right\}$; 
\item[(5)] $g = \{g_{\ell}\}_{\ell \in \mathcal{L}}$,
$g_{\ell} \colon \mathcal{M}_{\ell} \times \mathcal{U}_{\ell} \to (\mathbb{R}^3)^M$ with
\[g_{\ell}((x_i, v_i, \tau_i, u_i^{\ell_i}, \mu^{\ell_i})_{i \in \mathcal{I}_M}) = (v_i, a_i^{\ell_i}, 1)_{i\in \mathcal{I}_M},\]
$\text{ where } a_i^{\ell_{i}} \text{ is defined as in equation \eqref{eqn: acc_av}}$; 
\item[(6)]$S \text{ is a subset of } LC(\Sigma_2), \text{ where }$
\begin{equation*}
\begin{aligned}
& LC(\Sigma_2) = \Big\{\big(\ell, (x_i, v_i, \tau_i, \mu^{\ell_i}), \ell', (x_i', v_i', \tau_i', \mu^{\ell'_{i}})\big)_{i \in \mathcal{I}_M} \colon
\exists k \in \mathcal{I}_M, t_k \in [0, T], \\
& \text{ s.t. } \forall i \not = k, (x_i(t_k), v_i(t_k), \tau_i(t_k), \ell_i(t_k)) =(x_i'(t_k), v_i'(t_k), \tau_i'(t_k), \ell_i'(t_k)),\\ &  \text{ and } x_k(t_k) = x_k'(t_k), v_k(t_k) = v_k'(t_k), \tau_k'(t_k)=0,\\
&\ell_{k}'(t_{k}) = (\ell_{k}(t_{k}) + 1) (1-\delta_{m}(\ell_{k}(t_{k}))) \text{ or } (\ell_{k}(t_{k}) - 1) (1-\delta_{1}(\ell_{k}(t_{k})))
\Big\}.
\end{aligned}
\end{equation*}
\end{itemize}
\end{definition}
}
Now we will define the hybrid state of the hybrid system $\Sigma_2$.
\revision{
\begin{definition}
A hybrid state of the hybrid system $\Sigma_2$ is a $5$-tuple $(\ell, x, v, \tau, \mu)$, where $\ell$ is the location, $(x, v, \tau, \mu) \in \mathcal{M}_{\ell}$. We denote by $\mathcal{HS}_2$ the set of all hybrid states of the hybrid system $\Sigma_2$. 
\end{definition}
}
Next we will give the definition of the trajectory of the hybrid system $\Sigma_2$.
\revision{
\begin{definition}
\label{Def_trajectory_HS_2}
A trajectory of the hybrid system $\Sigma_2$ with initial condition \\
$(\ell_0, x_0, v_0, \tau_0, \mu_0) \in \mathcal{L} \times X^M \times \left(\mathcal{M}^{+}(\mathbb{R}^2)\right)^m (\text{ if } i_1 \not = i_2 \in \mathcal{I}_M, \text{ then } \tau_{i_1,0} \not = \tau_{i_2, 0})$ is a map $\xi \colon [0, T] \to \mathcal{HS}_2$,
$\xi(t) = (\ell(t), x(t), v(t), \tau(t), \mu(t))$ such that for $i\in \mathcal{I}_M$ and $n=1, \dots, N_{\tau}-1$, the following holds:
\begin{itemize}
\item[(1)] $(x_i(0), v_i(0), \tau_i(0)) = (x_{i,0}, v_{i,0}, \tau_{i,0}) \in X$;
\item[(2)] $\text{For }t \in [0, T_1- \tau_{i,0}), \ell_i(t) = \ell_{i,0} \in J$, 
$\ell_i(\cdot) \text{ is constant in } [nT_1 - \tau_{i,0}, (n+1)T_1 - \tau_{i,0}), \text{ and is equal to } \ell_{i,n} \in J$;
\item[(3)] $\tau_i(nT_1 - \tau_{i,0}) = 0$;
\item[(4)] $\lim\limits_{t \to \left(nT_1 - \tau_{i,0} \right)^{-}}x_i(t) \text{ exists and is equal to } x_i(nT_1 - \tau_{i,0})$;
\item[(5)]$\text{For every }\varphi \in C_c^{\infty}(\mathbb{R} \times \mathbb{R}_{\geq 0}),\text{ and for all }t\in [0, T], \mu^{\ell_i(t)} \text{ satisfies}$\\
$ \supp{\mu^{\ell_i(t)}} \subset B(0, R) \text{ for some } R>0, \text{ and for almost every } t\in [0, T]$, \\
$\frac{\mathrm{d}}{\mathrm{d}t} \int_{\mathbb{R}\times \mathbb{R}_{\geq 0}} \varphi(x, v)\,\mathrm{d}\mu^{\ell_i(t)}(t)(x,v)=\\
= \int_{\mathbb{R}\times \mathbb{R}_{\geq 0}} \varphi(x,v)\,\mathrm{d}S(\mu^{\ell_i(t)-1}, \mu^{\ell_i(t)}, \mu^{\ell_i(t)+1})(t)(x,v)$\\ 
$+\int_{\mathbb{R}\times \mathbb{R}_{\geq 0}} \left(\nabla \varphi(x,v) \cdot \omega_{H_1, H_2, \mu^{\ell_i(t)}, x^{\ell_i(t)}, v^{\ell_i(t)}}(t, x, v)\right)\,\mathrm{d}\mu^{\ell_i(t)}(t)(x,v)$,\\
$\text{where } \omega_{H_1, H_2, \mu^{\ell_i(t)}, x^{\ell_i(t)}, v^{\ell_i(t)}}(t, x, v) \coloneqq$ \\
$= \left(v, \left(H_1*_1(\mu^{\ell_i(t)}(t)+\nu^{\ell_i(t)}(t)) + H_2*(\mu^{\ell_i(t)}(t)+\nu^{\ell_i(t)}(t))\right)(x,v)\right).$
\item[(6)]$ \text{For almost every } t \in [0, T], \text{ with } u_i^{\ell_i} \colon [0, T] \to I $ \text{ a measurable control } \\
$\frac{\mathrm{d}}{\mathrm{d}t} (x_i(t), v_i(t), \tau_i(t)) = g_{\ell_{i}(t)}(x_i(t), v_i(t), \tau_i(t), u_i^{\ell_i}(t), \mu^{\ell_i(t)}(t))$.
\end{itemize}
\end{definition}
}
Before actually proving the existence of trajectories of the hybrid system $\Sigma_2$ as in Definition \ref{Def_trajectory_HS_2}, it will be convenient to address the stability of the hybrid system $\Sigma_2$ with respect to the initial data first. 

Let \revision{$t_0^1 = \min \limits_{i\in \mathcal{I}_M}\left\{T_1 - \tau_{i,0}\right\}$.} Then there is no autonomous vehicle changing lane over the time interval $[0, t_0^1)$ on any lane. As in Theorem \ref{thm_traj_HS_1}, it is enough to show the stability of the hybrid system $\Sigma_2$ with respect to the initial data over the time interval $[0, t_0^1)$. \revision{In particular, for $t \in [0, t_0^1)$, the dynamics of autonomous vehicle $i \in \mathcal{I}_M$ and the human-driven vehicles in lane $\ell_i$ satisfy system \eqref{Mean-flied_limit} with the following initial conditions: 
$(x_i(0), v_i(0), \mu^{\ell_i}(0))= (x_{i,0}, v_{i,0}, \mu_0^{\ell_i}) \in \mathbb{R} \times \mathbb{R}_{\geq 0} \times \mathcal{M}^{+}(\mathbb{R}\times \mathbb{R}_{\geq 0})$.}
Furthermore, we endow space $\mathcal{X}_n \colon \mathbb{R}^{2n} \times \mathcal{M}^{+}(\mathbb{R} \times \mathbb{R}_{\geq 0})$ for any $n \in \mathbb{Z}^{+}$ with the following metric: for any $(x_1, v_1, \mu_1), (x_2, v_2, \mu_2)\in \mathcal{X}_n$, 
\[\|(x_1, v_1, \mu_1) - (x_2, v_2, \mu_2)\|_{\mathcal{X}_n} \colon = \frac{1}{n} \sum\limits_{k=1}^{n} \left(|x_{i,1} - x_{i, 2}|+|v_{i,1} - v_{i,2}|\right) + W_1^{1,1}(\mu_1, \mu_2), \]
where $W_1^{1,1}$ is the generalized Wasserstein distance in $ \mathcal{M}^{+}(\mathbb{R} \times \mathbb{R}_{\geq 0})$. 

\begin{lemma}
For $j \in J$, and $q \in \{1, 2\}$, let $\mu^{j,q}$ be two solutions to system \eqref{RVPDE_HS_2} over the time interval $[0, t_0^1)$ with two different initial data $\mu_0^{j,q} \in \mathcal{M}^{+}(\mathbb{R} \times \mathbb{R}_{\geq 0})$. Then there exists $\bar{C}>0$ such that, 
\begin{align}
\label{eqn_gdw_inequality_new}
& W_1^{1,1}(\mu^{j,1}(t), \mu^{j,2}(t)) \leq \bar{C} \left(W_1^{1,1}(\mu_0^{j,1}, \mu_0^{j,2})+ \right. \\ \nonumber 
&\left.+\int_0^t \|(x^{j,1}(s), v^{j,1}(s), \mu^{j, 1}(s)) -(x^{j,2}(s), v^{j,2}(s), \mu^{j, 2}(s)) \|_{\mathcal{X}_{M_j}}\,\mathrm{d}s\right).
\end{align}
\end{lemma}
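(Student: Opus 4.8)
The plan is to compare the two solutions through the Lagrangian ``sample-and-hold'' construction recalled in Subsection~\ref{sec_PDE_source}, which represents each $\mu^{j,q}(t)$, $q=1,2$, as the limit in $W_1^{1,1}$ of push-forwards under the flow maps $\mathcal{T}_\cdot^{\mu^{j,q},\nu^{j,q}}$ of~\eqref{eqn_flow}, incremented at each node by $\Delta t\,S(\mu^{j-1,q},\mu^{j,q},\mu^{j+1,q})$. On a single step $[n\Delta t,(n+1)\Delta t]$ I would split the generalized Wasserstein distance between the two approximants, using the subadditivity and the scaling bound $W_1^{1,1}(k\mu,k\nu)\le kW_1^{1,1}(\mu,\nu)$ of Proposition~\ref{pro_gwd_property}, into a \emph{transport contribution} produced by the flow maps and a \emph{source contribution} produced by $S$. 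I would then estimate each piece, sum over the steps, and pass to the limit $k\to\infty$, relying on the continuity of $W_1^{1,1}$ and the convergence of the scheme to the unique solution.

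For the transport contribution, writing $\mathcal{T}^q:=\mathcal{T}_{\Delta t}^{\mu^{j,q}(n\Delta t),\nu^{j,q}(n\Delta t)}$ and $\mu^q:=\mu^{j,q}(n\Delta t)$, I would insert an intermediate term,
\[
W_1^{1,1}\!\left(\mathcal{T}^1\#\mu^1,\mathcal{T}^2\#\mu^2\right)\le W_1^{1,1}\!\left(\mathcal{T}^1\#\mu^1,\mathcal{T}^1\#\mu^2\right)+W_1^{1,1}\!\left(\mathcal{T}^1\#\mu^2,\mathcal{T}^2\#\mu^2\right).
\]
The first summand is controlled by the second part of Lemma~\ref{lm_66_gwd}: the one-step flow-map Lipschitz constant, which by~\eqref{eqn_flow_bounded_Lip} equals $e^{L\Delta t}$, multiplies $W_1^{1,1}(\mu^1,\mu^2)$; taking the product of these factors across the $t/\Delta t$ steps yields exactly $e^{Lt}$, which I absorb into $\bar C$. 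The second summand, by the first part of Lemma~\ref{lm_66_gwd}, is bounded by $\|\mathcal{T}^1-\mathcal{T}^2\|_{L^\infty(\supp\mu^2)}$, and the pointwise estimate~\eqref{eqn_flow_bounded_Lip} reduces this to a time-integral of the $L^\infty$ difference of the velocity fields $H_1*_1(\mu^{j,q}+\nu^{j,q})+H_2*(\mu^{j,q}+\nu^{j,q})$. By Lemma~\ref{lm_67_gwd} that difference is bounded by $W_1^{1,1}(\mu^{j,1}+\nu^{j,1},\mu^{j,2}+\nu^{j,2})$, which by subadditivity splits as $W_1^{1,1}(\mu^{j,1},\mu^{j,2})+W_1^{1,1}(\nu^{j,1},\nu^{j,2})$.

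The atomic term is handled by the equal-mass bound for atomic measures recalled earlier, $W_1^{1,1}(\nu^{j,1},\nu^{j,2})\le\frac{1}{M_j}\sum_{k=1}^{M_j}|(y_k^{j,1},w_k^{j,1})-(y_k^{j,2},w_k^{j,2})|$, which is precisely the $\mathbb{R}^{2M_j}$ part of $\|\cdot\|_{\mathcal{X}_{M_j}}$. For the source contribution I would invoke hypothesis $(S_2)$: since $S$ in~\eqref{source_term}--\eqref{Macro-Macro} is assembled from terms $p([A^l-A^k-\Delta]_+)\mu^k$, its Lipschitz dependence follows from the Lipschitzianity of $p$ and of $[\cdot]_+$ together with the equi-boundedness of supports, which makes $A^l=H_1*_1(\mu^l+\nu^l)+H_2*(\mu^l+\nu^l)$ locally Lipschitz in its arguments via Lemma~\ref{lm_67_gwd}; this again produces a bound by $W_1^{1,1}(\mu^{j,1},\mu^{j,2})$ plus $\nu$-differences, hence by $\|\cdot\|_{\mathcal{X}_{M_j}}$. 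Collecting all terms, letting $\Delta t\to0$, and keeping the propagated datum $W_1^{1,1}(\mu_0^{j,1},\mu_0^{j,2})$ separate gives the asserted integral inequality.

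The main obstacle is the bookkeeping of the two genuinely different perturbations: the measures $\mu^{j,1}\ne\mu^{j,2}$ enter both the flow map and the source and generate the self-referential $W_1^{1,1}(\mu^{j,1},\mu^{j,2})$ term that is permitted to sit inside the $\mathcal{X}_{M_j}$-norm on the right, whereas the autonomous data enter only through $\nu^j$ and the acceleration $A^l$ and must be reduced to the $\mathbb{R}^{2M_j}$ component of the norm. The delicate point is to make these reductions uniform in the discretization parameter $k$ and to verify that the limit $k\to\infty$ commutes with the estimates; in particular one must check that the local Lipschitz constant $L_{\rho,R}$ of Lemma~\ref{lm_67_gwd}, depending on the common support bound $R$ and on $\rho$, is independent of $k$ and of which solution is considered.
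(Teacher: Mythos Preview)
Your proposal is correct and follows essentially the same route as the paper's own proof: both compare the two solutions through the sample-and-hold approximants $\mu_k^{j,q}$, split $W_1^{1,1}$ at each step via Proposition~\ref{pro_gwd_property} into a transport part and a source part, handle the transport part by the same triangle-inequality insertion together with Lemma~\ref{lm_66_gwd}, equation~\eqref{eqn_flow_bounded_Lip}, and Lemma~\ref{lm_67_gwd}, bound the source part by hypothesis~$(S_2)$, iterate over the nodes, and pass to the limit $k\to\infty$. Your discussion of why $S$ in~\eqref{source_term}--\eqref{Macro-Macro} satisfies $(S_2)$ and of the uniformity of the constants in $k$ goes slightly beyond what the paper spells out, but the underlying argument is the same.
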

\begin{proof}
Let $\mu^{j,q}$ be two solutions to system \eqref{RVPDE_HS_2} over the time interval $[0, t_0^1)$ with two different initial data $\mu_0^{j,q}$, $q=1, 2$. Let $t \in [0, t_0^1)$ be fixed and let $\Delta t = \frac{t_0^1}{2^k}$ for a fixed $k \in \mathbb{N}^{+}$. Decompose the time interval $[0, t_0^1)$ into $[0, \Delta t]$, $[\Delta t, 2 \Delta t], \dots, [(2^k-1)\Delta t, 2^k \Delta t)$. Let $n$ be the maximum integer such that $t - n\Delta t \geq 0$, then $t \in [n \Delta t, (n+1)\Delta t)$. By section \ref{sec_PDE_source}, we have, $\mu^{j, q}(t) = \lim \limits_{k \to \infty} \mu_k^{j,q}(t)$, where $q=1, 2$ and $\mu_k^{j,q}$ is defined as following:
\begin{align*}
& \mu_k^{j,q}(0) = \mu_0^{j,q},\\
& \mu_k^{j,q}((n+1)\Delta t) = \mathcal{T}_{\Delta t}^{\mu_k^{j,q}(n\Delta t), \nu^{j,q}(n\Delta t)} \# \mu_k^{j,q}(n \Delta t) + \Delta t S(\mu_{k}^{j,q}(n\Delta t)),\\
& \mu_k^{j,q}(t) = \mathcal{T}_{\tau}^{\mu_k^{j,q}(n \Delta t), \nu^{j,q}(n\Delta t)}\#\mu_{k}^{j,q}(n\Delta t) + \tau S(\mu_{k}^{j,q}(n \Delta t)),
\end{align*}
where $\tau = t - n\Delta t$ and $\nu^{j,q}(n\Delta t) = \frac{1}{M_j}\sum\limits_{i=1}^{M_j}\delta_{\left(x_i^{j,q}(n\Delta t), v_{i}^{j,q}(n\Delta t)\right)}$, with \\$(x_i^{j,q}(n\Delta t), v_i^{j,q}(n\Delta t))$ being the vector of position-velocity of the $i$-th autonomous vehicle on lane $j$ at time $n\Delta t$ when the initial data to system \eqref{RVPDE_HS_2} is given by $\mu_0^{j, q}$. 
Note that 
\begin{align*}
& W_1^{1,1}(\mu_k^{j,1}(t), \mu_k^{j,2}(t)) \leq W_{1}^{1,1}\left(\tau S(\mu_{k}^{j,1}(n \Delta t)), \tau S(\mu_{k}^{j,2}(n \Delta t)) \right) \\
&+ W_1^{1,1}\left( \mathcal{T}_{\tau}^{\mu_k^{j,1}(n \Delta t), \nu^{j,1}(n \Delta t)}\#\mu_{k}^{j,1}(n\Delta t), \mathcal{T}_{\tau}^{\mu_k^{j,1}(n \Delta t), \nu^{j,1}(n \Delta t)}\#\mu_{k}^{j,2}(n\Delta t)\right)\\
&+ W_1^{1,1} \left( \mathcal{T}_{\tau}^{\mu_k^{j,1}(n \Delta t), \nu^{j,1}(n \Delta t)}\#\mu_{k}^{j,2}(n\Delta t), \mathcal{T}_{\tau}^{\mu_k^{j,2}(n \Delta t), \nu^{j,2}(n \Delta t)}\#\mu_{k}^{j,2}(n\Delta t)\right),
\end{align*}
where the last inequality is due to Proposition \ref{pro_gwd_property}. 
By the properties of the source term $S$, $(S_2)$, and of the generalized Wasserstein distance $W_1^{1,1}$, Proposition \ref{pro_gwd_property}, there exists some constant $L_S$ such that 
\begin{align*}
W_{1}^{1,1}\left(\tau S(\mu_{k}^{j,1}(n \Delta t)), \tau S(\mu_{k}^{j,2}(n \Delta t)) \right) \leq \tau L_SW_1^{1,1}(\mu_k^{j,1}(n\Delta t), \mu_k^{j,2}(n\Delta t)).
\end{align*}
Since the flow map $\mathcal{T}_{\tau}^{\mu_k^{j,1}(n\Delta t), \nu^{j,1}(n \Delta t)}$ is Lipschitz, by Lemma \ref{lm_66_gwd}, there exists some constant $L_1$, such that, 
\begin{align*}
& W_1^{1,1}\left( \mathcal{T}_{\tau}^{\mu_k^{j,1}(n \Delta t), \nu^{j,1}(n \Delta t)}\#\mu_{k}^{j,1}(n\Delta t), \mathcal{T}_{\tau}^{\mu_k^{j,1}(n \Delta t), \nu^{j,1}(n \Delta t)}\#\mu_{k}^{j,2}(n\Delta t)\right)\\
\leq & 
L_1 W_1^{1,1}(\mu_k^{j,1}(n\Delta t), \mu_k^{j,2}(n \Delta t)). 
\end{align*}
Since the flow maps $\mathcal{T}_{\tau}^{\mu_k^{j,1}(n \Delta t), \nu^{j,1}(n\Delta t)}$ and $\mathcal{T}_{\tau}^{\mu_k^{j,2}(n \Delta t), \nu^{j,2}(n\Delta t)}$ are bounded and Borel measurable, by Lemma \ref{lm_66_gwd}, equation \eqref{eqn_flow_bounded_Lip} and Lemma \ref{lm_67_gwd}, there exist $L_{\mathcal{T}}, \rho,L_{*}>0$, such that
\begin{align*} 
& W_1^{1,1}\left(\mathcal{T}_{\tau}^{\mu_k^{j,1}(n \Delta t), \nu^{j,1}(n \Delta t)}\#\mu_{k}^{j,2}(n\Delta t), \mathcal{T}_{\tau}^{\mu_k^{j,2}(n \Delta t), \nu^{j,2}(n \Delta t)}\#\mu_{k}^{j,2}(n\Delta t)\right)\\ \leq & \left\|\mathcal{T}_{\tau}^{\mu_k^{j,1}(n \Delta t), \nu^{j,1}(n \Delta t)}- \mathcal{T}_{\tau}^{\mu_k^{j,2}(n \Delta t), \nu^{j,2}(n \Delta t)}\right\|_{L^{\infty}(B(0, R))}\\
\leq & L_{*} \int_{n\Delta t}^{t} e^{L_{\mathcal{T}}(s-t)} \left[\left(\frac{1}{M_j} \sum\limits_{i=1}^{M_j} (|x_i^{j,1}(s) - x_i^{j,2}(s)|+|v_{i}^{j,1}(s)-v_{i}^{j,2}(s)|)\right)\right.\\
\quad\quad\quad & \left. W_1^{1,1}(\mu_k^{j,1}(s), \mu_k^{j,2}(s))\right]\,\mathrm{d}s. 
\end{align*}
Therefore, 
\begin{equation}
\label{eqn_unique_1}
\begin{aligned}
& W_1^{1,1}(\mu_k^{j,1}(t), \mu_k^{j,2}(t)) \leq \left(\tau L_S + L_1\right)W_1^{1,1}(\mu_k^{j,1}(n\Delta t), \mu_k^{j,2}(n\Delta t))\\ 
& +L_{*} \int_{n\Delta t}^{t} e^{L_{\mathcal{T}}(s-t)} \left[\left(\frac{1}{M_j} \sum\limits_{i=1}^{M_j} \left(|x_i^{j,1}(s) - x_i^{j,2}(s)|+|v_{i}^{j,1}(s)-v_{i}^{j,2}(s)|\right)\right) \right.\\ 
& \left.+ W_1^{1,1}(\mu_k^{j,1}(s), \mu_k^{j,2}(s))\right]\,\mathrm{d}s. 
\end{aligned}
\end{equation}

Similarly, there exists $L_2>0$, such that 
\begin{align}
\label{eqn_unique_2}
& W_1^{1,1}(\mu_k^{j, 1}(n\Delta t), \mu_{k}^{j, 2}(n\Delta t)) \\ \nonumber 
\leq & (L_2+\Delta t L_S) W_1^{1,1}(\mu_k^{j,1}((n-1)\Delta t), \mu_k^{j,2}((n-1)\Delta t))\\ \nonumber 
& + L_{*} \int_{(n-1)\Delta t}^{n \Delta t } e^{L_{\mathcal{T}}(s-t)} \left[\left(\frac{1}{M_j} \sum\limits_{i=1}^{M_j} \left(|x_i^{j,1}(s) - x_i^{j,2}(s)|+|v_i^{j,1}(s)-v_i^{j,2}(s)|\right)\right) \right.\\ \nonumber 
& \left.+ W_1^{1,1}(\mu_k^{j,1}(s) - \mu_k^{j,2}(s))\right]\,\mathrm{d}s. 
\end{align}

Combine with equations \eqref{eqn_unique_1} and \eqref{eqn_unique_2}, and the definition of norm $\|\cdot \|_{\mathcal{X}_{M_j}}$, we obtain 
there exists $C_0$ such that 
\begin{align*}
& W_1^{1,1}(\mu_k^{j,1}(t), \mu_k^{j,2}(t)) \leq C_0 \left(W_1^{1,1}(\mu_0^{j,1}, \mu_0^{j,2}) \right. \\
&\left.+\int_0^t \|(x^{j,1}(s), v^{j,1}(s), \mu_k^{j, 1}(s)) -(x^{j,2}(s), v^{j,2}(s), \mu_k^{j, 2}(s)) \|_{\mathcal{X}_{M_j}}\,\mathrm{d}s\right).
\end{align*}
Take $k \to \infty$ and consider the definition of $\mu^{j,p}$, $p=1,2$, we have, there exists $\bar{C}$ such that inequality \eqref{eqn_gdw_inequality_new} is true. 
\end{proof}


\begin{theorem}
\label{thm_uniqueness_traj_HS_2}
Let $(x^{j,i}, v^{j,i})$, $i=1,2$, be two solutions of system \eqref{AV_position}-\eqref{AV_velocity} relative to given respective initial data $(x_{0}^{j,i}, v_{0}^{j,i}) \in \mathbb{R} \times \mathbb{R}_{\geq 0}$ and let $\mu^{j,i}$, $i=1, 2$, be two solutions of system \eqref{RVPDE_HS_2} relative to given respective initial data $\mu^{j,i}_{0}$, over the time interval $[0, t_0^1)$. Then there exists a constant $C>0$ such that 
\begin{align*}
& \left\|\left(x^{j,1}(t), v^{j,1}(t), \mu^{j,1}(t)\right)-\left(y^{j,2}(t), w^{j,2}(t), \mu^{j,2}(t)\right)\right\|_{\mathcal{X}_{M_j}} \\
\leq & C \left\|\left(x^{j,1}_0, v^{j,1}_0, \mu^{j,1}_0\right)-\left(x^{j,2}_0, v^{j,2}_0, \mu^{j,2}_0\right)\right\|_{\mathcal{X}_{M_j}}.
\end{align*}
\end{theorem}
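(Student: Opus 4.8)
The plan is to treat the two components of the $\mathcal{X}_{M_j}$-norm separately and then close the estimate with a Grönwall argument. The preceding lemma already supplies the bound on the measure component, namely inequality \eqref{eqn_gdw_inequality_new}, which controls $W_1^{1,1}(\mu^{j,1}(t),\mu^{j,2}(t))$ by the initial measure discrepancy plus the time-integral of the full $\mathcal{X}_{M_j}$-distance. What remains is to produce a companion estimate for the finite-dimensional component $\frac{1}{M_j}\sum_{k=1}^{M_j}\big(|y_k^{j,1}(t)-y_k^{j,2}(t)|+|w_k^{j,1}(t)-w_k^{j,2}(t)|\big)$ coming from the autonomous-vehicle ODEs \eqref{AVODEs_HS_2}, and then to add the two bounds.

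First I would write the ODE trajectories in integral form. For positions this gives $y_k^{j,1}(t)-y_k^{j,2}(t)=(y_{k,0}^{j,1}-y_{k,0}^{j,2})+\int_0^t(w_k^{j,1}(s)-w_k^{j,2}(s))\,\mathrm{d}s$, so the position difference is immediately controlled by the initial datum and the time-integral of the velocity difference. For velocities the integrand is the difference of the two acceleration fields; since the same control $u_k^j$ enters both, it cancels, and I split the remaining difference
\begin{align*}
&\big(H_1*_1(\mu^{j,1}+\nu^{j,1})+H_2*(\mu^{j,1}+\nu^{j,1})\big)(y_k^{j,1},w_k^{j,1})\\
&\quad-\big(H_1*_1(\mu^{j,2}+\nu^{j,2})+H_2*(\mu^{j,2}+\nu^{j,2})\big)(y_k^{j,2},w_k^{j,2})
\end{align*}
into a same-measure/different-point part and a different-measure/same-point part.

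For the first part I use that, over $[0,t_0^1)$, the vehicle states stay in a fixed ball (by the a priori bounds of THEOREM \ref{thm_traj_HS_1}, which follow from the sub-linear growth \eqref{sublinear}) while the supports of $\mu^{j,q}(t)$ lie in a fixed $B(0,R)$; hence the velocity fields $H_i*(\mu^{j,2}+\nu^{j,2})$ are locally Lipschitz with a uniform constant on that ball, and this part is bounded by a constant times $|(y_k^{j,1},w_k^{j,1})-(y_k^{j,2},w_k^{j,2})|$. For the second part I invoke LEMMA \ref{lm_67_gwd} (and its evident analogue for the partial convolution $*_1$) to bound it by $L_{\rho,R}\,W_1^{1,1}(\mu^{j,1}+\nu^{j,1},\mu^{j,2}+\nu^{j,2})$, then PROPOSITION \ref{pro_gwd_property} to split $W_1^{1,1}(\mu^{j,1}+\nu^{j,1},\mu^{j,2}+\nu^{j,2})\le W_1^{1,1}(\mu^{j,1},\mu^{j,2})+W_1^{1,1}(\nu^{j,1},\nu^{j,2})$, and finally the atomic-measure estimate $W_1^{1,1}(\nu^{j,1},\nu^{j,2})\le W_1(\nu^{j,1},\nu^{j,2})\le\frac{1}{M_j}\sum_{k}|(y_k^{j,1},w_k^{j,1})-(y_k^{j,2},w_k^{j,2})|$. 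Summing over $k$ and dividing by $M_j$, every term on the right is dominated by $\|(y^{j,1},w^{j,1},\mu^{j,1})(s)-(y^{j,2},w^{j,2},\mu^{j,2})(s)\|_{\mathcal{X}_{M_j}}$, and the $W_1^{1,1}(\mu^{j,1},\mu^{j,2})$ contribution, being independent of $k$, survives the averaging unchanged. This yields a bound on the finite-dimensional component by the initial ODE discrepancy plus the time-integral of the full $\mathcal{X}_{M_j}$-distance.

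Adding this to \eqref{eqn_gdw_inequality_new} and writing $D(t):=\|(y^{j,1}(t),w^{j,1}(t),\mu^{j,1}(t))-(y^{j,2}(t),w^{j,2}(t),\mu^{j,2}(t))\|_{\mathcal{X}_{M_j}}$, I obtain a single integral inequality
\[
D(t)\le C\Big(D(0)+\int_0^t D(s)\,\mathrm{d}s\Big),\qquad t\in[0,t_0^1),
\]
and Grönwall's inequality then gives $D(t)\le Ce^{Ct}D(0)$, which is the assertion. The main obstacle I anticipate lies entirely in the ODE step: one must justify that a single Lipschitz constant for the convolution velocity fields works along both trajectories — which requires the uniform-in-time confinement of both the vehicle states and the measure supports to one compact set — and one must check that the averaging by $1/M_j$ reproduces exactly the $\mathcal{X}_{M_j}$-norm so that all constants are independent of $M_j$. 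The measure component is essentially furnished by the preceding lemma, so the genuinely new content is the coupling of the ODE estimate to it.
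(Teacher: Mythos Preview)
Your proposal is correct and follows essentially the same route as the paper: derive the position bound \eqref{eqn48} by direct integration, derive the velocity bound \eqref{eqn_49} via LEMMA~\ref{lm_67_gwd}, combine these with the preceding lemma's estimate \eqref{eqn_gdw_inequality_new} for the measure component, and close with Gr\"onwall. The paper's proof is terser---it simply cites LEMMA~\ref{lm_67_gwd} for the velocity step rather than spelling out the same-measure/different-point versus different-measure/same-point splitting and the atomic-measure bound on $W_1^{1,1}(\nu^{j,1},\nu^{j,2})$---but your more explicit decomposition is precisely what underlies that citation.
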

\begin{remark}
Theorem \ref{thm_uniqueness_traj_HS_2} implies that the trajectory of hybrid system $\Sigma_2$, if it exists, is uniquely determined by the initial conditions. 
\end{remark}
\begin{proof}
By integration we have, for $t \in [0, t_0^1)$, 
\[x_k^{j, i}(t) = \int_0^t v_k^{j,i}(s) \,\mathrm{d}s + v_{k,0}^{j,i}, \quad i=1, 2, \quad k\in\mathcal{I}_M.\]
Thus 
\begin{equation}
\label{eqn48}
|x_k^{j,1}(t) - x_k^{j,2}(t)| \leq |x_{k,0}^{j,1} - x_{k,0}^{j,2}| + \int_0^t |v_k^{j,1}(s) - v_k^{j,2}(s)|\,\mathrm{d}s.
\end{equation}
In addition, by Lemma \ref{lm_67_gwd}, there exists a constant $L_R$, such that 
\begin{equation}
\label{eqn_49} 
\begin{aligned}
&|v_k^{j,1}(t) - v_k^{j,2}(t)| 
\leq |v_{k,0}^{j,1}- v_{k,0}^{j,2}| \\
&+ L_R \int_0^t \left(\frac{1}{M_j} \sum_{k=1}^{M_j}\left( |x_k^{j,1}(s) - x_{k}^{j,2}(s)|+|v_k^{j,1}(s) - v_{k}^{j,2}(s)|\right) \right. \\
& \quad \quad \quad \quad \left.+W_1^{1,1} (\mu^{j,1}(s), \mu^{j,2}(s))\right)\,\mathrm{d}s.
\end{aligned}
\end{equation}
Combine with equations \eqref{eqn_gdw_inequality_new} \eqref{eqn48}, \eqref{eqn_49}, and the definition of the norm $\|\cdot\|_{\mathcal{X}_{M_j}}$, we have, there exists a constant $C$, s.t.,
\begin{align*}
&\|(x^{j,1}(t), v^{j,1}(t), \mu^{j,1}(t))-(x^{j,2}(t), v^{j,2}(t), \mu^{j,2}(t))\|_{\mathcal{X}_{M_j}}\\
\leq & C\left(\|(x_0^{j,1}, v_0^{j,1}, \mu_0^{j,1})-(x_0^{j,2}, v_0^{j,2}, \mu_0^{j,2}))\|_{\mathcal{X}_{M_j}} \right.\\
& \left.+\int_0^t \|(x^{j,1}(s), v^{j, 1}(s), \mu^{j,1}(s))-(x^{j,2}(s), v^{j, 2}(s), \mu^{j,2}(s))\|_{\mathcal{X}_{M_j}}\,\mathrm{d}s\right). 
\end{align*}
One can conclude the stability estimate by applying Gronwall's inequality. 
\end{proof}

We shall now derive the existence of the trajectory of the hybrid system $\Sigma_2$. It is enough to show that the trajectories of the vehicles exist over the time interval $[0, t_0^1)$.

\begin{theorem}
\label{thm_main}
On lane $j\in J$, let $(x^j_{k,0}, v^j_{k,0}) \in \mathbb{R} \times \mathbb{R}_{\geq 0}$, $k\in \mathcal{I}_M$, $\mu^j_0 \in \mathcal{M}^{+}(\mathbb{R} \times \mathbb{R}_{\geq 0})$ and $u_{*} \in L^1([0,T], \mathcal{U})$ be given. In addition, assume that $\mu_0^j$ is of bounded support in $B(0, R)$ for $R>0$. Then the trajectories of the vehicles exist on lane $j$ over the time interval $[0, t_0^1)$.
\end{theorem}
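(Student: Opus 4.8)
The plan is to build the coupled trajectory on $[0,t_0^1)$, where by construction no autonomous vehicle reaches its timer limit $T_1$, so the location $\iota(t)\equiv\iota_0$ is constant and the hybrid dynamics collapse to the coupled ODE--PDE system \eqref{AVODEs_HS_2}--\eqref{RVPDE_HS_2}. Items 1--5 of Definition \ref{Def_trajectory_HS_2} are then automatic on this interval, and it remains to produce $(y^j,w^j,\mu^j)_{j\in J}$ satisfying items 6 and 7. Since uniqueness and stability are already furnished by Theorem \ref{thm_uniqueness_traj_HS_2}, I would obtain existence through a Banach fixed-point argument on the product space $\prod_{j\in J} C\big([0,t_0^1];\mathcal{X}_{M_j}\big)$, restricted to the closed subset of trajectories whose autonomous components stay in a fixed ball and whose measure components are equi-compactly supported in some $B(0,R)$ with uniformly bounded mass; this subset is complete.

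First I would define a solution operator $\mathcal{K}$. Given an input trajectory $(\hat y^j,\hat w^j,\hat\mu^j)_{j}$, form the atomic measures $\hat\nu^j$ from $(\hat y^j,\hat w^j)$ and freeze them, together with the neighboring measures $\hat\mu^{j-1},\hat\mu^{j+1}$, in the right-hand sides. For each lane the ODEs \eqref{AVODEs_HS_2} then have a locally Lipschitz, sublinear right-hand side by Lemma \ref{lemmasublinear}, so Theorem \ref{global_existence_uniqueness} yields a unique Lipschitz-in-time solution $(y^j,w^j)$ with the a priori bound $\|\xi^j(t)\|\le(\|\xi_0^j\|+\bar C t_0^1)e^{\bar C t_0^1}$. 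With the resulting $\nu^j$ and the now-frozen source $S(\hat\mu^{j-1},\hat\mu^j,\hat\mu^{j+1})$, each PDE \eqref{RVPDE_HS_2} becomes a single Vlasov-type equation with source, so the existence result of Section \ref{sec_PDE_source} produces a unique $\mu^j(t)=\lim_{k}\mu_k^j(t)$ via the sample-and-hold scheme. Setting $\mathcal{K}[(\hat y,\hat w,\hat\mu)]:=(y,w,\mu)$, a fixed point of $\mathcal{K}$ is exactly a solution of the coupled system, since then $\nu^j$ and the neighbor measures coincide with the genuine ones.

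To run Banach's theorem I would verify invariance and contraction. Invariance follows from the a priori ODE bound above, from the fact that the flow map $\mathcal{T}_t^{\mu^j,\nu^j}$ transports $B(0,R)$ into a slightly larger ball over the finite horizon $[0,t_0^1)$ (velocities, and hence displacements, are bounded), and from hypothesis $(S_1)$, which keeps the mass and support added by $S$ bounded; continuity in $t$ is inherited from the Lipschitz ODE flow and the $W_1^{1,1}$-continuity of the scheme. For the contraction, I would combine the ODE Gronwall estimates \eqref{eqn48}--\eqref{eqn_49}, the PDE stability inequality \eqref{eqn_gdw_inequality_new}, the sub-additivity and homogeneity of $W_1^{1,1}$ from Proposition \ref{pro_gwd_property}, and the Lipschitz bounds of Lemmas \ref{lm_66_gwd}--\ref{lm_67_gwd}, to estimate
\[
\sup_{t\le t_0^1}\sum_{j\in J}\big\|(y,w,\mu)_j(t)-(y',w',\mu')_j(t)\big\|_{\mathcal{X}_{M_j}}
\le C\,t_0^1\,\sup_{t\le t_0^1}\sum_{j\in J}\big\|(\hat y,\hat w,\hat\mu)_j(t)-(\hat y',\hat w',\hat\mu')_j(t)\big\|_{\mathcal{X}_{M_j}}.
\]
Replacing the sup norm by the equivalent weighted norm $e^{-\lambda t}$ with $\lambda$ large (or iterating $\mathcal{K}$) turns this into a genuine contraction independently of the size of $t_0^1$, and the unique fixed point is the sought trajectory on $[0,t_0^1)$.

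The main obstacle I expect is verifying that the source term $S$ of \eqref{source_term}--\eqref{Macro-Macro} satisfies the structural hypotheses $(S_1)$ and $(S_2)$ needed to invoke the Section \ref{sec_PDE_source} theorem lane-by-lane, and that these estimates survive the three-lane coupling. Concretely, for $S^{k,l}(\mu^k,\mu^l)=p\big([A^l-A^k-\Delta]_+\big)\mu^k$ I must show the scalar weight is Lipschitz in the measures --- using the Lipschitz dependence of the average accelerations $A^l=H_1*_1(\mu^l+\nu^l)+H_2*(\mu^l+\nu^l)$ on $(\mu^l,\nu^l)$ from Lemma \ref{lm_67_gwd}, together with the boundedness and Lipschitz continuity of $p$ and of $x\mapsto[x]_+$ --- and then control $W_1^{1,1}$ of a scalar multiple of a measure by $W_1^{1,1}$ of the measures. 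Because $S$ on lane $j$ depends on $\mu^{j-1}$ and $\mu^{j+1}$, its Lipschitz constant feeds coupling across adjacent lanes, so the contraction estimate must be closed simultaneously over all $j\in J$ by summing and applying Proposition \ref{pro_gwd_property}; this bookkeeping, rather than any single estimate, is the delicate part.
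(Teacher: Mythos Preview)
Your fixed-point scheme is a reasonable route to existence and, modulo the technical check on $(S_1)$--$(S_2)$ that you correctly flag, would prove the theorem. It is, however, genuinely different from the paper's proof. The paper does not run a Banach argument; it constructs the solution as a mean-field limit. It discretizes $\mu_0^j$ by atomic measures $\mu_0^{j,N}\to\mu_0^j$ in $W_1^{1,1}$, invokes Theorem~\ref{thm_traj_HS_1} to obtain trajectories $\xi_N^j$ of the finite-dimensional hybrid system $\Sigma_1$ for each $N$, and then applies the revised Arzel\`a--Ascoli Theorem~\ref{revised_AA} --- proved for exactly this purpose --- to extract a uniform limit: its hypothesis is met because the $W_1^{1,1}$-jump caused by a single human-driven lane change has size $\|\mu_0^j\|/N\to0$, so the $\xi_N^j$ are Lipschitz up to a defect that vanishes with $N$. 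The limit $(y_*^j,w_*^j,\mu_*^j)$ is then checked to solve \eqref{AVODEs_HS_2}--\eqref{RVPDE_HS_2}, with the microscopic lane-change rule converging to the source term $S$ via \eqref{eqn_existence_4}. Thus the paper's route establishes existence \emph{and} the micro-to-meso passage simultaneously, which is the paper's central objective; your Picard argument is more direct and avoids compactness, but it says nothing about the mean-field limit. One small gap in your plan: the PDE well-posedness result you invoke from Section~\ref{sec_PDE_source} is stated for initial data in $\mathcal{M}_0^{ac}\cap\mathcal{M}^1$, whereas the present theorem assumes only $\mu_0^j\in\mathcal{M}^+$ with bounded support, so you would need either to extend that result to general compactly supported measures or to add an absolute-continuity hypothesis.
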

\begin{proof}
We will first construct a sequence of atomic measures to approximate the measure $\mu_0^j$ in generalized Wasserstein distance. For every $N_j \in \mathbb{N}^{+}$, consider the atomic measure
\begin{equation}
\label{eqn_decomposition}
\mu_{0}^{N_j} = \sum\limits_{i=1}^{N_j} \revision{m_h} \delta_{\left(x_{i,0}^{N_j}, v_{i,0}^{N_j}\right)},
\end{equation} 
\revision{with $m_h = \tfrac{\sum\limits_{j=1}^m \|\mu_0^j\|}{\sum\limits_{j=1}^m N_j}$}, such that $\lim\limits_{N_j\to \infty} W_1^{1,1}(\mu_{0}^{N_j}, \mu_0^j) =0$. 
Here we call \revision{$m_h$} the average mass of the human-driven vehicle. 

In addition, fix a weakly convergent sequence $(u_{N_j})_{N_j \in \mathbb{N}}$ in $L^1([0, T], \mathcal{U})$ of control functions such that $u_{N_j}\rightharpoonup u_{*}$ in $L^1([0, T], \mathcal{U})$ as $N_j \to \infty$. By Theorem \ref{thm_traj_HS_1}, for each initial datum $\xi_0^{N_j} = (x_{N_j}^0, v_{N_j}^0, x_0^{N_j}, v_{0}^{N_j})\in (\mathbb{R}\times\mathbb{R}_{\geq 0})^{M_j}\times (\mathbb{R}\times\mathbb{R}_{\geq 0})^{N_j}$ depending on $N_j$, there exists a unique trajectory of the hybrid system $\Sigma_1$ with control $u_{N_j}$ over the time interval $[0, t_0^1)$.

Denote the trajectories of the vehicles on lane $j$ over the time interval $[0, t_0^1]$ with $\xi^{N_j}(t) = (x_{N_j}(t), v_{N_j}(t), \mu^{N_{j}}(t)) \in \mathcal{X}_{M_j}$. Here we identify $\mu^{N_j}(t) \in \mathcal{M}^{+}(\mathbb{R} \times \mathbb{R}_{\geq 0})$ the atomic measure of the human-driven vehicles with position-velocity $(x^{N_j}(t), v^{N_j}(t))$. 

By Theorem \ref{thm_traj_HS_1}, the trajectories of the vehicles are Lipschitz continuous with respect to time over the time interval when there is no lane changing. Furthermore, note that the average mass of a human-driven vehicle $\revision{m_h} \to 0$ as $N_j \to \infty$. Thus there exists $L>0$, such that for any $\varepsilon>0$, there exists $\tilde{N}_j>0$, such that whenever $N_j \geq \tilde{N}_j$, 
$
\|\xi^{N_j}(t) - \xi^{N_j}(s)\|_{\mathcal{X}_{M_j}} \leq L|t-s| + \min\{\varepsilon, |s-t|\}.
$
By Theorem \ref{revised_AA}, there exists a sub-sequence, again denoted by $\xi^{N_j}(\cdot) = (x_{N_j}(\cdot), v_{N_j}(\cdot), \mu^{N_j}(\cdot))$ converging uniformly to a limit $\xi^{*,j}(\cdot) = (x_{*}^j(\cdot), v_{*}^j(\cdot), \mu_{*}^j(\cdot))$. 
We will first verify that $(x_{*}^j(\cdot), v^j_{*}(\cdot))$ is a solution of system \eqref{AV_position}-\eqref{AV_velocity} for $\mu^j = \mu^j_{*}$ and $u^j = u_{N_j}$. 

Note that $\xi^{N_j} \darrow \xi^{*,j}$ implies that 
\begin{align*}
& (x_{N_j}(t), v_{N_j}(t)) \darrow (x_*^j(t), v_{*}^j(t)) \text{ in } [0, t_0^1); \\
& (\dot{x}_{N_j}(t), \dot{v}_{N_j}(t)) \rightharpoonup (\dot{x}_{*}^j(t), \dot{v}_{*}^j(t)) \text{ in } L^1([0, t_0^1), \mathbb{R}\times \mathbb{R}^{+});\\
& \lim \limits_{N_j \to \infty} W_1^{1,1}(\mu^{N_j}(t), \mu_{*}^j(t)) = 0.
\end{align*}
In particular, 
$\dot{x}_{k,*}^j(t) = v_{k, *}^j(t), \text{ for all } k=1, \dots, M_j$.
Furthermore, let us denote now 
\[\nu_{N_j} = \frac{1}{M_j} \sum \limits_{k=1}^{M_j} \delta_{(x_{k,N_j}(t), v_{k,N_j}(t))} \text{ and } \nu_{*}^{j} = \frac{1}{M_j} \sum \limits_{k=1}^{M_j} \delta_{(x_{k, *}^j(t), v_{k,*}^j(t))}. \]
By the uniform convergence of the trajectories and Lemma \ref{lm_67}, we have, as $N_j \to +\infty$,
$ W_1(\nu_{N_j}(t), \nu_{*}^j(t)) \to 0 $.
In addition, by the sublinear growth of $H_1$ and $H_2$, we have, as $N \to \infty$, 
\begin{align*}
&(H_1*_1(\mu^{N_j}+\nu_{N_j})+H_2*(\mu^{N_j}+\nu_{N^j}))(x_{k,N_j}(t), v_{k,N_j}(t)) \\
\darrow & (H_1*_1(\mu_{*}^j + \nu_{*}^j)+H_2*(\mu_{*}^j + \nu_{*}^j))(x_{k,*}^j(t), v_{k,*}^j(t)).
\end{align*}
By the weak convergence of $u_{N_j} \text{ to } u_{*}$ and of $\dot{v}_{N_j} \text{ to } \dot{v}_{*}^j$, for every $\tau \in [0, t_0^1]$, \\
$
\int_0^{\tau} \dot{v}_{k, *}^j(t)\,dt =
\int_0^{\tau} \left((H_1*_1(\mu_*^j + \nu_{*}^j)+H_2*(\mu_{*}^j + \nu_{*}^j))(x_{k,*}^j(t), v_{k,*}^j(t))+u_{k,*}^j(t)\right)\,\mathrm{d}t$.

Now we will verify that $\mu_{*}^j$ is a solution to system \eqref{RVPDE_HS_2} for $\nu^j = \nu_{*}^j$. 
For any time $t \in [0, t_0^1]$, let $N^1_j$ be the number of human-driven vehicles that still stay on lane $j$ and let $(x_i^{N^1_j}(t), v_i^{N^1_j}(t))$ be the location-velocity of the $i$-th human-driven vehicle that does not perform lane changing on lane $j$. Then we can track the position of those human-driven vehicles by an atomic measure
$$\mu^{N_j^1}(t) = \sum\limits_{i=1}^{N^1_j} \revision{m_h} \delta_{\left(x_i^{N^1_j}(t), v_{i}^{N^1_j}(t)\right)}.$$

For all $\varphi \in C_c^{\infty}(\mathbb{R} \times \mathbb{R}^{+})$, consider the following differentiation 
\begin{align*}
& \frac{\mathrm{d}}{\mathrm{d}t} \langle \varphi, \mu^{N^1_j}(t)\rangle = \frac{\mathrm{d}}{\mathrm{d}t} \sum \limits_{i=1}^{N^1_j} \revision{m_h}\varphi(x_i^{N^1_j}(t), v_i^{N^1_j}(t))\\
= & \revision{m_h} \left[ \sum\limits_{i=1}^{N^1_j} \partial_x \varphi(x_i^{N^1_j}(t), v_i^{N^1_j}(t)) v_i^{N^1_j}(t) +\sum\limits_{i=1}^{N^1_j} \partial_v \varphi(x_i^{N^1_j}(t), v_i^{N^1_j}(t)) \right. \\
& \left. (H_1*_1(\mu^{N_j}+\nu_{N_j})+H_2*(\mu^{N_j} + \nu_{N_j}))(x_{i}^{N^1_j}(t), v_{i}^{N^1_j}(t))
\right].
\end{align*}
Thus for all $s \in [0, t_0^1)$, we have 
\begin{align*}
& \langle \varphi,\mu^{N_j^1}(s)- \mu^{N_j^1}(0)\rangle = \int_0^s \left[\int_{\mathbb{R} \times \mathbb{R}^{+}}\partial_x \varphi(x,v) v \right.\\ 
& \left. + \partial_v \varphi(x,v) (H_1*_1(\mu^{N_j} +\nu_{N_j})+H_2*(\mu^{N_j} + \nu_{N_j}))(x,v)\, \mathrm{d}\mu^{N_j^1}(t)(x,v)\right]\,\mathrm{d}t.
\end{align*}
Furthermore, 
\begin{equation}
\label{eqn_existence_1}
\lim\limits_{N_j^1 \to \infty} \langle \varphi, \mu^{N^1_j}(s) - \mu^{N^1_j}(0)\rangle = \langle \varphi, \mu_{*}^j - \mu_0^j\rangle. 
\end{equation}
By dominated convergence theorem, we obtain the limit (possibly for a sub-sequence) that 
\begin{equation} \label{eqn_existence_2}
\begin{aligned}
& \lim \limits_{N_j^1 \to \infty} \int_{0}^{s} \int_{\mathbb{R}\times \mathbb{R}^{+}} \left(\nabla_x \varphi(x,v) \cdot v\right) \,d\mu^{N_j^1}(t)(x,v)\,\mathrm{d}t =\\
=& \int_{0}^s\int_{\mathbb{R} \times \mathbb{R}^{+}} \left( \nabla_x \varphi(x,v) \cdot v \right) \, d_{\mu_{*}^j}(t)(x,v)\,\mathrm{d}t,
\end{aligned}
\end{equation}
for all $\varphi \in C_c^{\infty}(\mathbb{R} \times \mathbb{R}^{+})$. 
Furthermore, by Lemma \ref{lm_67} and Lemma \ref{lm_67_gwd}, we have, for every $\rho>0$, 
\begin{align*}
& \lim \limits_{N_j \to \infty} \left\| \left(H_1*_1(\mu^{N_j} + \nu_{N_j}) + H_2 * (\mu^{N_j} + \nu_{N_j})\right) -\right.\\
&\left.- \left((H_1*_1(\mu_{*}^j + \nu_{*}^j) + H_2 * (\mu_{*}^j + \nu_{*}^j)\right)\right\|_{L^{\infty}(B(0, \rho))} =0.
\end{align*}
Now since $\varphi \in C_c^{\infty} (\mathbb{R} \times \mathbb{R}^{+})$ has compact support, we obtain\\
\begin{center}
$
\lim \limits_{N_j \to \infty} \left\| \partial_{v} \varphi \left( \left(H_1*_1(\mu^{N_j} + \nu_{N_j}) + H_2 * (\mu^{N_j} + \nu_{N_j})\right) \right.\right.$\\
$\left. \left.- \left((H_1*_1(\mu_{*}^j + \nu_{*}^j) + H_2 * (\mu_{*}^j + \nu_{*}^j)\right) \right)\right\|_{\infty} =0$. 
\end{center}
Thus,
\begin{equation}
\label{eqn_existence_3}
\begin{aligned}
& \lim \limits_{k \to \infty} \int_{0}^s \int_{\mathbb{R} \times \mathbb{R}^{+}} \partial_v \varphi(x,v) (H_1*_1(\mu^{N_j} +\nu_{N_j})+\\
&+H_2*(\mu^{N_j} + \nu_{N_j}))(x,v)\, \mathrm{d}\mu^{N_j^1}(t)(x,v)\,\mathrm{d}t\\ 
= & \int_{0}^s \int_{\mathbb{R} \times \mathbb{R}^{+}} \partial_v \varphi(x,v) (H_1*_1(\mu^{N_j} +\nu_{N_j})+H_2*(\mu^{N_j} + \nu_{N_j}))(x,v)\, \mathrm{d}\mu^{N_j^1}(t)(x,v)\,\mathrm{d}t. 
\end{aligned}
\end{equation}
By the lane changing condition, we define 
\begin{align*}
\mu^{N_j^2}(t) =& \sum\limits_{i=1}^{N_{j-1}}\revision{m_h}\delta_{\left(x_i^{N_{j-1}}(t), v_i^{ N_{j-1}}(t)\right)}p \left([A^j + \Delta]_{+}, [A^{j} - A^{j-1} - \Delta]_{+}\right)\\
& -\sum\limits_{i=1}^{N_{j}}\revision{m_h}\delta_{\left(x_i^{N_{j}}(t), v_i^{ N_{j}}(t)\right)}p \left([A^j + \Delta]_{+}, [A^{j-1} - A^{j} - \Delta]_{+}\right)\\
& + \sum\limits_{i=1}^{N_{j+1}}\revision{m_h}\delta_{\left(x_i^{N_{j+1}}(t), v_i^{N_{j+1}}(t)\right)}p \left([A^j + \Delta]_{+}, [A^{j} - A^{j+1} - \Delta]_{+}\right)\\
& -\sum\limits_{i=1}^{N_{j}}\revision{m_h}\delta_{\left(x_i^{N_{j}}(t), v_i^{ N_{j}}(t)\right)}p \left([A^j + \Delta]_{+}, [A^{j+1} - A^{j} - \Delta]_{+}\right)
\end{align*}
where 
\begin{align*}
A^{j-1} & = \left(H_1*_1(\mu^{N_{j-1}}(t) + \nu_{N_{j-1}}(t))+H_2*(\mu^{N_{j-1}}(t) + \nu_{N_{j-1}}(t))\right)(x,v), \\
A^{j} & = \left(H_1*_1(\mu^{N_j}(t) + \nu_{N_j}(t))+H_2*(\mu^{N_j}(t) + \nu_{N_j}(t))\right)(x,v), \\
A^{j+1} & = \left(H_1*_1(\mu^{{N}_{j+1}}(t) + \nu_{N_{j+1}}(t))+H_2*(\mu^{{N}_{j+1}}(t) + \nu_{N_{j+1}}(t))\right)(x,v).
\end{align*}
Therefore, 
$\mu^{N_j}(t) = \mu^{N_j^1}(t) + \mu^{N_j^2}(t)$, and in addition, 
\begin{align*}
& \lim \limits_{N_{j-1} \to \infty}\sum\limits_{i=1}^{N_{j-1}}\revision{m_h}\delta_{\left(x_i^{ N_{j-1}}(t), v_i^{N_{j-1}}(t)\right)}p \left([A^j + \Delta]_{+}, [A^{j} - A^{j-1} - \Delta]_{+}\right)\\
=& \mu_{*}^{j-1} p\left(\left[\left(H_1*_1(\mu_{*}^{j}+\nu_{*}^{j})+H_2*(\mu_{*}^{j}+\nu_{*}^{j})\right)+\Delta\right]_+\right.,\\
&\left.\left[\left(H_1*_1(\mu_{*}^{j}+\nu_{*}^{j})+H_2*(\mu_{*}^{j}+\nu_{*}^{j})\right)-\right.\right.\\
&\left.\left.- \left(H_1*_1(\mu_{*}^{j-1}+\nu_{*}^{j-1})+H_2*(\mu_{*}^{j-1}+\nu_{*}^{j-1})\right)-\Delta \right]_{+}\right)\\
=& S^{j-1, j}(\mu_{*}^{j-1}, \mu_{*}^j).
\end{align*}
Furthermore, 
\begin{align}
\label{eqn_existence_4}
&\lim \limits_{N_j \to \infty} \mu^{N_j^2}(t) = \left(S^{j-1, j}(\mu_*^{j-1}, \mu_{*}^j)-S^{j, j-1}(\mu_{*}^{j-1}, \mu_{*}^j)\right)(1-\delta_{j,1})\\ \nonumber 
& +\left(S^{j+1, j}(\mu_{*}^j, \mu_{*}^{j+1}) - S^{j, j+1}(\mu^{j}, \mu^{j+1})(1-\delta_{j,m})\right)
= S(\mu_{*}^{j-1}, \mu_{*}^j, \mu_{*}^{j+1}).
\end{align}
The statement follows by combining equations \eqref{eqn_existence_1}, \eqref{eqn_existence_2}, \eqref{eqn_existence_3}, and \eqref{eqn_existence_4}. 
\end{proof}

\section{Conclusion} \label{sec_conclusions}

In this paper we have focused on a multi-lane multi-class description of vehicular traffic flow, where simultaneous presence of human-driven and autonomous vehicles has been considered.

The microscopic dynamics have been formulated by using a Bando-Follow-the-Leader type model, in which the interaction with the closest vehicle ahead is replaced by a space-dependent convolution kernel modeling interactions with the surrounding flow. Autonomous vehicles have been distinguished by control dynamics. Lane changing description has led to discrete events within the differential equations, and thus to a so-called hybrid system whose well-posedness has been studied. 

Inspired by the empirical fact that the {\em penetration rate} of the autonomous vehicles is nowadays small, we have computed a mean-field limit for the dynamics of the human-driven vehicles only, leading to a coupled system of a PDE and ODEs with discrete events. The discrete lane changing descriptions for human-driven vehicles has been modeled by a source term of the corresponding Vlasov-type equation. Existence and uniqueness study of the trajectories of this system has been performed. \revision{Moreover, the rigorous convergence of the finite dimensional hybrid system to the infinite dimensional hybrid system has been proved using the generalized Wasserstein distance.}

We point-out that the given application, based on traffic flow, inspiring this work is not restrictive, and many others may lead to the mathematical frameworks developed and studied here. More precisely, we refer to all physical multi-agent systems that are intrinsically characterized by heterogeneity and instantaneous jumps in one of their states. For instance, these include also models for air traffic control~\cite{Tomlin_1998}, chemical process control~\cite{Engell_2000} and manufacturing~\cite{Pepyne_2000}.

\section*{Acknowledgments}
G.~V. wishes to thank Benedetto Piccoli's Lab for the hospitality at Rutgers University and Michael Herty for supporting this research work.

\bibliographystyle{siamplain}
\bibliography{ref_traffic.bib}

\end{document}